\setlist{noitemsep,topsep=0pt}
\newtheorem{observation}{Observation}
\newtheorem{proposition}{Proposition}
\newtheorem{theorem}{Theorem}
\newtheorem{lemma}{Lemma}
\theoremstyle{definition}
\newtheorem{definition}{Definition}
\theoremstyle{remark}
\newtheorem{remark}{Remark}
\newcommand{\vvarepsilon}{\boldsymbol{\varepsilon}}
\newcommand{\vx}{\boldsymbol{x}}
\newcommand{\vy}{\boldsymbol{y}}
\newcommand{\valpha}{\boldsymbol{\alpha}}
\newcommand{\vbeta}{\boldsymbol{\beta}}
\newcommand{\vnull}{\boldsymbol{0}}
\newcommand{\vone}{\boldsymbol{1}}
\newcommand{\vell}{\boldsymbol{\ell}}
\newcommand{\vr}{\boldsymbol{r}}
\newcommand{\vs}{\boldsymbol{s}}
\newcommand{\vdelta}{\boldsymbol{\delta}}
\newcommand{\Sn}{\mathcal{S}_n}
\newcommand{\Pb}{\mathcal{P}_n(\vbeta)}
\newcommand{\T}{^{\mathrm{T}}}
\newcommand{\RR}{\mathbb{R}}
\begin{document}

\title{Walks on hyperplane arrangements and optimization of piecewise linear functions\thanks{This work was supported by Czech Science Foundation project 19-02773S.}%
}

    \author{Michal \v{C}ern\'{y}\thanks{Department of Econometrics, University of Economics, Prague, W.~Churchill Square~4, Prague~3, Czech Republic  
    ({\tt cernym@vse.cz}).}
    \and Milan Hlad\'ik\thanks{Department of Applied Mathematics, Faculty of Mathematics and Physics, Charles University, Malo\-stransk\'e n\'am. 25, Pragu 1e, Czech Republic ({\tt hladik@kam.mff.cuni.cz}).}
\and Miroslav Rada\thanks{Department of Econometrics and Department of Financial Accounting and Auditing, University of Economics, Prague, W.~Churchill Square~4, Prague~3, Czech Republic ({\tt miroslav.rada@vse.cz}).}}



\maketitle

\begin{abstract}
    We propose an exact iterative algorithm for minimization of a class of continuous cell-wise linear convex functions on a
    hyperplane arrangement. Our particular setup is motivated by evaluation of so-called rank estimators used in robust regression, where every cell of the underlying arrangement corresponds to a permutation of residuals (and we also show that the class of function for which the method works is more general). 
    The main obstacle in the construction of the algorithm is how to find an improving direction while standing in a point incident with exponentially many cells of the arrangement.
    We overcome this difficulty using Birkhoff Theorem which allows us to express the cone of improving directions in the exponential number of cells using a linear system with quadratic number of variables only.

\end{abstract}

\section{Introduction and motivation}

\subsection{Problem formulation}

Let $\Sn$ denote the set of all permutations of $\{1, \dots, n\}$.
Given $X \in \RR^{n \times p}$,
$\vy \in \RR^n$ and $\valpha \in \RR^n$, the task is to minimize the function
\begin{equation}
F(\vbeta) \coloneqq
\max_{\pi \in \Sn} \sum_{i=1}^n 
\alpha_{i}(y_{\pi(i)} - \vx_{\pi(i)}\T\vbeta),
\label{eq:defF}
\end{equation}
where $\vx_i\T$ stands for the $i$th row of $X$. 

Thorough the paper, we assume that 
\begin{equation}
\alpha_1 \leq \alpha_2 \leq \cdots \leq \alpha_n.
\label{eq:alpha}
\end{equation} 
It will be shown in Observation \ref{obs:ass:alpha:wlog} that assumption \eqref{eq:alpha} is without loss of generality.

Assumption \eqref{eq:alpha} is motivated by the statistical viewpoint. 
In Section~\ref{sect:da} we will summarize the significance 
of the function $F$ in robust regression. 
Our notation, which might be considered as unusual in optimization, follows the standards in statistics: $X$ is interpreted as a matrix of explanatory variables in a linear regression model, $\vy$ is interpreted as the dependent variable and the minimizer of $F$ then gives an estimate of regression parameters.
In this context, the numbers
$$
e_i^{\vbeta} \coloneqq y_i - \vx_i\T\vbeta,\quad i = 1, \dots, n
$$
are called \emph{residuals}.

In addition to the short summary in Section \ref{sect:da}, a reader can find more information on the statistical theory behind \eqref{eq:defF} e.g. in the book by Hettmansperger and 
McKean~\cite{hettmansperger:2010:RobustNonparametricStatistical}. In this paper we focus just on the optimization aspects of \eqref{eq:defF}.

\subsection{Basic properties}

Assumption \eqref{eq:alpha} says that $\alpha_1,\ldots, \alpha_n$ are ordered. Observation \ref{obs:ass:alpha:wlog} shows that regardless of the ordering the $\alpha$-coefficients, the infimum of $F(\beta)$ is the same. 

\begin{observation}
    \label{obs:ass:alpha:wlog}
    For every permutation $\sigma \in S_n$ the following equality holds true: $$\inf_{\vbeta} \max_{\pi \in \Sn} \sum_{i=1}^n \alpha_{\sigma(i)} e_{\pi(i)} = \inf_{\vbeta} \max_{\pi \in \Sn} \sum_{i=1}^n \alpha_i e_{\pi(i)}.$$
\end{observation}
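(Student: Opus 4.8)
The plan is to prove the stronger, \emph{pointwise} statement that the two inner maxima coincide for every fixed $\vbeta$; the equality of infima is then immediate. Concretely, fixing $\vbeta$ (and hence the residuals $e_i = e_i^{\vbeta}$), I would show that
$$\max_{\pi \in \Sn} \sum_{i=1}^n \alpha_{\sigma(i)} e_{\pi(i)} = \max_{\pi \in \Sn} \sum_{i=1}^n \alpha_i e_{\pi(i)},$$
after which taking $\inf_{\vbeta}$ on both sides yields the claim.

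To establish this pointwise identity, the idea is a change of variable in the permutation over which we maximize. Since $\sigma$ is a \emph{fixed} permutation, the map $\pi \mapsto \rho := \pi \circ \sigma^{-1}$ is a bijection of $\Sn$ onto itself, so the maximum over $\pi$ equals the maximum over $\rho$. Writing $\pi = \rho \circ \sigma$, i.e. $\pi(i) = \rho(\sigma(i))$, the summand becomes $\alpha_{\sigma(i)}\, e_{\rho(\sigma(i))}$; relabeling the summation index by $k = \sigma(i)$ (again a bijection of $\{1,\dots,n\}$) turns the sum into $\sum_{k=1}^n \alpha_k e_{\rho(k)}$. Hence $\max_{\pi} \sum_i \alpha_{\sigma(i)} e_{\pi(i)} = \max_{\rho} \sum_k \alpha_k e_{\rho(k)}$, which is exactly the right-hand side.

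I do not expect any serious obstacle here; the only point requiring care is bookkeeping, namely checking that the two substitutions---the reparametrization $\pi \mapsto \rho$ of the permutation and the reindexing $i \mapsto k = \sigma(i)$ of the sum---are both genuine bijections, so that no term is lost or double-counted. As an alternative route giving additional insight, one may invoke the rearrangement inequality: for any fixed data, $\max_{\pi} \sum_i c_i e_{\pi(i)}$ equals the inner product of the sorted coordinates of $(c_i)$ and $(e_i)$, and therefore depends only on the multiset $\{c_1,\dots,c_n\}$. Since $(\alpha_{\sigma(i)})_i$ and $(\alpha_i)_i$ share the same multiset of values, the two maxima agree, confirming that the ordering assumption \eqref{eq:alpha} is without loss of generality.
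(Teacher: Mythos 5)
Your proposal is correct and follows essentially the same route as the paper: the paper also proves the pointwise identity of the two maxima by reindexing the sum via $\sigma$ and then substituting $\pi' = \pi \circ \sigma^{-1}$, which is a bijection of $\Sn$. The rearrangement-inequality remark is a pleasant extra but does not change the substance.
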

\begin{proof} If $\sigma$ is an arbitrary permutation, then

\begin{equation}\tag*{\qed}
\max_{\pi \in \Sn} \sum_{i=1}^n \alpha_{\sigma(i)} e_{\pi(i)} 
=\max_{\pi \in \Sn} \sum_{i=1}^n \alpha_{i} e_{\pi(\sigma^{-1}(i))} 
=\max_{\pi' \in \Sn} \sum_{i=1}^n \alpha_i e_{\pi'(i)}.
\end{equation}
\end{proof}

From (\ref{eq:defF}) it follows that $F$ is a maximum of finitely many linear functions. Thus:

\begin{observation}\label{obs:cont:convex} 
    $F$ is continuous and convex. 
\end{observation}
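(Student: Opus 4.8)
The plan is to lean entirely on the representation of $F$ as a pointwise maximum of finitely many affine functions, which was already announced just before the statement. First I would fix an arbitrary permutation $\pi \in \Sn$ and isolate the corresponding summand inside \eqref{eq:defF}, namely the map
\[
\vbeta \mapsto g_\pi(\vbeta) \coloneqq \sum_{i=1}^n \alpha_i\bigl(y_{\pi(i)} - \vx_{\pi(i)}\T\vbeta\bigr).
\]
Once $\pi$ is held fixed, $i \mapsto \pi(i)$ is a fixed reindexing and no inner $\max$ remains, so I can expand this as $\sum_{i=1}^n \alpha_i y_{\pi(i)} - \bigl(\sum_{i=1}^n \alpha_i \vx_{\pi(i)}\bigr)\T \vbeta$, i.e. a constant minus a linear form in $\vbeta$. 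Hence each $g_\pi$ is an affine function of $\vbeta$, and affine functions are simultaneously continuous and convex.

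Next I would observe that the index set $\Sn$ is finite (it has $n!$ elements), so $F = \max_{\pi \in \Sn} g_\pi$ is a genuine maximum—not merely a supremum—of finitely many affine functions. I would then invoke two standard stability facts: first, the pointwise maximum of finitely many continuous functions is continuous; second, the pointwise maximum of an arbitrary family of convex functions is convex, since the epigraph of the maximum equals the intersection of the individual epigraphs and an intersection of convex sets is convex. Applying the first fact to the family $\{g_\pi\}_{\pi \in \Sn}$ yields continuity of $F$, and applying the second yields convexity of $F$.

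I do not anticipate any real obstacle: the argument reduces to pointing at the representation \eqref{eq:defF} and citing textbook properties of finite maxima. The only point that deserves an explicit word is the verification that each summand $g_\pi$ is truly affine rather than merely piecewise defined; as noted above, this is immediate because fixing $\pi$ removes the inner maximization, leaving a linear expression in $\vbeta$. With that observation in place, both claims follow at once.
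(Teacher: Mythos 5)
Your proposal is correct and follows exactly the route the paper takes: the paper simply notes that by \eqref{eq:defF} the function $F$ is a maximum of finitely many affine (linear plus constant) functions of $\vbeta$ and lets the standard closure properties of continuity and convexity under finite pointwise maxima do the rest. Your write-up merely spells out the details the paper leaves implicit, so there is nothing to add or correct.
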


The ordering of residuals will play an important role in the entire text.

\begin{definition}\hfill
    \label{def:consitent:permutation}
\begin{itemize}
\item[(a)] A permutation $\pi\in\Sn$ is called
\emph{consistent with $\vbeta$} if
\begin{equation}
e^{\vbeta}_{\pi(1)} \leq 
e^{\vbeta}_{\pi(2)} \leq \cdots \leq e^{\vbeta}_{\pi(n)}.
\label{eq:consistent:permutation}
\end{equation}
\item[(b)] The set of all permutations consistent with $\vbeta$
is denoted by $\Pb$.
\end{itemize}
\end{definition}

The next property shows that it is not necessary to compute the $|\Sn| = n!$ terms in (\ref{eq:defF}). Instead, the value $F(\vbeta)$ can be computed in $O(n\log n)$ time by sorting.

\begin{observation}
$F(\vbeta) = \sum_{i=1}^n \alpha_{i}e_{\pi^*(i)}^{\vbeta}$,
where $\pi^*$ is an arbitrary permutation
consistent with $\vbeta$. 
\label{obs:polytime}
\end{observation}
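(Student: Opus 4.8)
The plan is to recognize the claim as an instance of the classical rearrangement inequality. Since by assumption \eqref{eq:alpha} the coefficients satisfy $\alpha_1 \le \cdots \le \alpha_n$, the linear form $\sum_{i=1}^n \alpha_i e_{\pi(i)}^{\vbeta}$ is maximized over $\pi \in \Sn$ precisely when the residuals $e_{\pi(i)}^{\vbeta}$ are arranged in nondecreasing order, i.e.\ when $\pi$ is consistent with $\vbeta$ in the sense of Definition~\ref{def:consitent:permutation}. I would prove this directly by a swapping (exchange) argument rather than quoting the inequality, since the argument is short and self-contained.

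First I would take any maximizer $\pi$ of the sum and show it can be turned into a consistent permutation without decreasing the objective. If $\pi$ is not consistent, there exist positions $i<j$ with $e_{\pi(i)}^{\vbeta} > e_{\pi(j)}^{\vbeta}$; in fact one may always pick such an adjacent inversion. Let $\pi'$ be obtained from $\pi$ by swapping the images of $i$ and $j$. A direct computation gives
\begin{equation*}
\sum_{k=1}^n \alpha_k e_{\pi'(k)}^{\vbeta} - \sum_{k=1}^n \alpha_k e_{\pi(k)}^{\vbeta}
= (\alpha_j - \alpha_i)\bigl(e_{\pi(i)}^{\vbeta} - e_{\pi(j)}^{\vbeta}\bigr) \ge 0,
\end{equation*}
where the inequality uses $\alpha_i \le \alpha_j$ (from $i<j$) together with $e_{\pi(i)}^{\vbeta} > e_{\pi(j)}^{\vbeta}$. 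Thus each such swap does not decrease the sum; repeating it while always resolving an adjacent inversion strictly decreases the number of inversions, so the process terminates after finitely many steps at a consistent permutation whose objective value is at least that of $\pi$. Hence the maximum $F(\vbeta)$ is attained at some consistent permutation.

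To finish, and to justify the word ``arbitrary'' in the statement, I would observe that the value $\sum_{i=1}^n \alpha_i e_{\pi^*(i)}^{\vbeta}$ does not depend on which consistent $\pi^*$ is chosen: for every consistent $\pi^*$ the tuple $\bigl(e_{\pi^*(1)}^{\vbeta}, \dots, e_{\pi^*(n)}^{\vbeta}\bigr)$ is exactly the nondecreasing ordering of the residuals, so the sum always pairs the fixed $\alpha_i$ with the $i$-th smallest residual. Combining this with the previous paragraph yields $\sum_{i=1}^n \alpha_i e_{\pi^*(i)}^{\vbeta} = F(\vbeta)$ for every consistent $\pi^*$.

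The main (and only mildly delicate) point is this last step about ties: when residuals coincide, several consistent permutations exist, and one must verify they all yield the same value, since the exchange argument by itself produces only one maximizing consistent permutation. Everything else is the routine rearrangement computation displayed above.
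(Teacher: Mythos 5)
Your proof is correct and follows essentially the same route as the paper's: an exchange argument showing that resolving an inversion changes the sum by $(\alpha_j-\alpha_i)(e_{\pi(i)}^{\vbeta}-e_{\pi(j)}^{\vbeta})\ge 0$, hence a consistent permutation attains the maximum. Your explicit treatment of ties (why the value is independent of the choice of consistent $\pi^*$) is a small point the paper glosses over, but otherwise the two arguments coincide.
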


\begin{proof} 
    Let $\pi \in \Sn$ such that $e^{\vbeta}_{\pi(k)} > e^{\vbeta}_{\pi(\ell)}$ for some
$k < \ell$ and let $\pi' \in \Sn$ result from $\pi$ by a transposition interchanging $k$ and $\ell$. We will show that 
\begin{equation}
\sum_{i=1}^n \alpha_{i}e_{\pi'(i)}^{\vbeta}
\geq \sum_{i=1}^n \alpha_{i}e_{\pi(i)}^{\vbeta}.
\label{eq:qneq}
\end{equation}
Since $\pi^*$ can be obtained from any $\pi\in\Sn$ by a sequence of such transpositions, we get 
$\sum_{i=1}^n \alpha_{i}e_{\pi^*(i)}^{\vbeta} \geq
\sum_{i=1}^n \alpha_{i}e_{\pi(i)}^{\vbeta}$ for all $\pi\in\Sn$. Thus
\begin{align}
\sum_{i=1}^n \alpha_{i}e_{\pi^*(i)}^{\vbeta} = \max_{\pi\in\Sn} \sum_{i=1}^n \alpha_{i}e_{\pi(i)}^{\vbeta} = F(\vbeta).
\label{eq:maxx}
\end{align}

\emph{Proof of (\ref{eq:qneq})}. By assumption we have 
$\Delta e \coloneqq e_{\pi(k)}^{\vbeta} - e_{\pi(\ell)}^{\vbeta} > 0$.
Assumption~(\ref{eq:alpha}) implies that $\Delta\alpha \coloneqq \alpha_{\ell} - \alpha_{k} \geq 0$. Now
\begin{align*}
\sum_{i=1}^n \alpha_{i}e_{\pi'(i)}^{\vbeta}
&= \sum_{i \neq k, \ell} \alpha_{i}e_{\pi'(i)}^{\vbeta}
    + \alpha_{k}e_{\pi'(k)}^{\vbeta} + 
    \alpha_{\ell}e_{\pi'(\ell)}^{\vbeta}
\\ & =
\sum_{i \neq k, \ell} \alpha_{i}e_{\pi(i)}^{\vbeta}
    + \alpha_{k}e_{\pi(\ell)}^{\vbeta} + 
    \alpha_{\ell}e_{\pi(k)}^{\vbeta}
\\ & =
\sum_{i \neq k, \ell} \alpha_{i}e_{\pi(i)}^{\vbeta}
    + \alpha_{k}e_{\pi(\ell)}^{\vbeta} + 
    (\alpha_{k} + \Delta\alpha)(e_{\pi(\ell)}^{\vbeta} + \Delta e)
\\ & =
\sum_{i \neq k, \ell} \alpha_{i}e_{\pi(i)}^{\vbeta}
    + (\alpha_{k} + \Delta\alpha)e_{\pi(\ell)}^{\vbeta} + 
    \alpha_k(e_{\pi(\ell)}^{\vbeta} + \Delta e) + \Delta\alpha\Delta e
\\ & =
\sum_{i=1}^n \alpha_{i}e_{\pi(i)}^{\vbeta} + \Delta\alpha\Delta e
 \geq \sum_{i=1}^n \alpha_{i}e_{\pi(i)}^{\vbeta}.
 \tag*{\qedhere}
\end{align*}

\end{proof}

\begin{observation}
Minimization of $F(\beta)$ can be expressed as the linear programming problem
\begin{align}\label{eq:min:obs:LPform}
\min_{\substack{t\in\RR \\ \vbeta\in\RR^p}}\ t\ \ \text{s.t.}\ \ t \geq \sum_{i=1}^n \alpha_{i}
(y_{\pi(i)} - \vx_{\pi(i)}\T\vbeta)\quad \forall \pi\in\Sn.
\end{align}\label{obs:LPform}
\end{observation}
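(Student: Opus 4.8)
The plan is to apply the standard epigraph reformulation of minimizing a pointwise maximum of affine functions. For each $\pi\in\Sn$ I write $g_\pi(\vbeta) \coloneqq \sum_{i=1}^n \alpha_i(y_{\pi(i)} - \vx_{\pi(i)}\T\vbeta)$, so that by \eqref{eq:defF} we have $F(\vbeta) = \max_{\pi\in\Sn} g_\pi(\vbeta)$. First I would observe that each $g_\pi$ is affine in $\vbeta$: the numbers $\alpha_i$ and $y_{\pi(i)}$ are constants and the term $-\vx_{\pi(i)}\T\vbeta$ is linear in $\vbeta$. Consequently the objective $t$ and every constraint $t \geq g_\pi(\vbeta)$ are linear in the decision variables $(t,\vbeta) \in \RR\times\RR^p$, which is exactly what is required for \eqref{eq:min:obs:LPform} to qualify as a linear program.

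The substance of the argument is to verify that the two problems share the same optimal value and the same set of minimizers in $\vbeta$. I would fix an arbitrary $\vbeta$ and note that the family of constraints $\{t \geq g_\pi(\vbeta) : \pi\in\Sn\}$ is jointly satisfiable if and only if $t \geq \max_{\pi\in\Sn} g_\pi(\vbeta) = F(\vbeta)$; hence the smallest $t$ feasible with this $\vbeta$ equals $F(\vbeta)$. Minimizing jointly over $(t,\vbeta)$ then reduces to minimizing $F(\vbeta)$ over $\vbeta$, i.e.
$$\min_{t,\vbeta}\{\,t : t \geq g_\pi(\vbeta)\ \forall\pi\in\Sn\,\} = \min_{\vbeta} F(\vbeta),$$
and a pair $(t^*,\vbeta^*)$ is optimal for \eqref{eq:min:obs:LPform} precisely when $\vbeta^*$ minimizes $F$ and $t^* = F(\vbeta^*)$. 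I should be mindful of whether the infimum is attained, but since only the equality of optimal values is claimed it suffices to argue at the level of infima, which holds regardless of attainment.

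I expect no genuine obstacle: this is the textbook passage from a min--max of affine functions to an LP. The only point worth flagging is that $\Sn$ has $n!$ elements, so \eqref{eq:min:obs:LPform} is a linear program with exponentially many constraints. This does not affect the validity of the statement --- it remains an LP --- but it is exactly the computational difficulty that the remainder of the paper sets out to overcome.
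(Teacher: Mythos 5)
Your proof is correct and is exactly the standard epigraph reformulation that the paper itself relies on (the paper states this observation without proof, treating it as immediate, and likewise notes the exponential number of constraints in the following sentence). Nothing to add.
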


However, due to the huge number of constrains, standard linear programming (LP) algorithms do not yield a polynomial-time method.
This is what makes the problem nontrivial. 

\subsection{Complexity}

Propositions \ref{pro:p:completeness} and \ref{pro:p:complete:unboundedness} show that a general LP can be reduced to \eqref{eq:min:obs:LPform}: 
indeed, the decision version of \eqref{eq:min:obs:LPform} is $P$-complete. 
(We recommend \cite{greenlaw:1995:LimitsParallelComputation} as an excellent reference book on $P$-completeness theory.)

\begin{proposition}
    \label{pro:p:completeness}
It is P-complete to check whether the optimal value of (\ref{eq:defF}) is nonpositive.
\end{proposition}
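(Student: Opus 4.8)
The plan is to establish both directions: membership in P and P-hardness.

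For membership, I would argue that the minimization of $F$, written as the linear program (\ref{eq:min:obs:LPform}), is solvable in polynomial time by the ellipsoid method despite having $n!$ constraints, because it admits a polynomial-time separation oracle. Given a candidate point $(\hat t, \hat\vbeta)$, the oracle computes $F(\hat\vbeta)$ in $O(n\log n)$ time by sorting the residuals as in Observation \ref{obs:polytime}; if $\hat t \geq F(\hat\vbeta)$ the point is feasible, and otherwise the constraint indexed by a permutation $\pi^*$ consistent with $\hat\vbeta$ satisfies $\sum_i \alpha_i(y_{\pi^*(i)} - \vx_{\pi^*(i)}\T\hat\vbeta) = F(\hat\vbeta) > \hat t$ and is returned as a separating inequality. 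By the equivalence of separation and optimization, the optimal value $\inf_\vbeta F(\vbeta)$ is computable (and unboundedness detectable) in polynomial time, so in particular one can decide whether it is nonpositive.

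For P-hardness I would reduce from feasibility of a system of linear inequalities (equivalently, linear programming), which is P-complete \cite{greenlaw:1995:LimitsParallelComputation}. The key observation is that the coefficient vector $\valpha$ is part of the input and the extreme choice $\valpha = (0,\ldots,0,1)\T$, which does satisfy (\ref{eq:alpha}), collapses the maximum over permutations: since only $\alpha_n \neq 0$ and $\pi(n)$ ranges over all of $\{1,\dots,n\}$ as $\pi$ ranges over $\Sn$, we obtain $F(\vbeta) = \max_{1\leq j\leq n}(y_j - \vx_j\T\vbeta)$, a plain maximum of affine functions. Thus, given an instance $A\vbeta \leq \boldsymbol{b}$ with $A \in \RR^{m\times p}$ and rows $\boldsymbol{a}_j\T$, I would set $n = m$, $X = -A$, $\vy = -\boldsymbol{b}$, and $\valpha = (0,\ldots,0,1)\T$; this map is computable in logarithmic space. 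Then $F(\vbeta) = \max_j(\boldsymbol{a}_j\T\vbeta - b_j)$, so $F(\vbeta) \leq 0$ holds exactly when $A\vbeta \leq \boldsymbol{b}$.

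It remains to connect the sign of the optimal value with feasibility, and this is the one step that needs care, since a priori $\inf_\vbeta F(\vbeta) \leq 0$ is weaker than the existence of a $\vbeta$ with $F(\vbeta) \leq 0$ because the infimum need not be attained. I would dispose of this by noting that $F$ is a maximum of finitely many affine functions, so $\inf_\vbeta F(\vbeta)$ is precisely the value of the LP (\ref{eq:min:obs:LPform}); this LP is always feasible (take any $\vbeta$ and $t$ large enough), hence by standard LP theory its value is either $-\infty$ or finite and attained. Consequently $\inf_\vbeta F(\vbeta) \leq 0$ if and only if some $\vbeta$ satisfies $F(\vbeta) \leq 0$, i.e.\ if and only if $A\vbeta \leq \boldsymbol{b}$ is feasible, completing the reduction. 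I expect this attainment argument to be the only genuinely delicate point; the separation oracle and the reduction itself are routine once the collapsing trick for $\valpha$ is identified.
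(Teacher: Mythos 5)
Your proof is correct and the hardness reduction is essentially the paper's own: both collapse the maximum over permutations by choosing $\valpha=(0,\dots,0,1)\T$ (which respects \eqref{eq:alpha}) and reduce from feasibility of a system of linear inequalities, the only difference being that the paper works with $X\vbeta\geq\vy$ directly while you negate signs to handle $A\vbeta\leq\boldsymbol{b}$. The two points you add --- membership in P via a sorting-based separation oracle for the ellipsoid method, and the attainment argument showing that a nonpositive infimum of the always-feasible LP \eqref{eq:min:obs:LPform} yields an actual $\vbeta$ with $F(\vbeta)\leq 0$ --- are details the paper leaves implicit, and you handle them correctly.
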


\begin{proof}
Checking solvability of a system of linear inequalities $X\vbeta\geq \vy$ is known to be P-complete. Solvability of the system is equivalent to the claim that the optimal value of the linear program
\begin{align}\label{eq:min:obs:Pcompl}
\min_{\substack{t\in\RR \\ \vbeta\in\RR^p}}\ t \ \ \text{s.t.}\ \ X\vbeta+t\vone\geq \vy
\end{align}
is nonpositive, where $\vone=(1,\dots,1)\T$. Put $\alpha_1=\dots=\alpha_{n-1}=0$ and $\alpha_n=1$. Then \eqref{eq:min:obs:LPform} takes the form of \eqref{eq:min:obs:Pcompl}.\qed

\end{proof}

Checking unboundedness of $F(\vbeta)$ from below is P-complete, too.

\begin{proposition}
    \label{pro:p:complete:unboundedness}
It is P-complete to check whether $F(\vbeta)$ is unbounded from below.
\end{proposition}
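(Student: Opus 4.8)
The plan is to prove membership in $P$ and $P$-hardness separately; the first is routine and the second is the heart of the matter. For membership I would characterize unboundedness through recession directions. Writing $\boldsymbol{h}_\pi \coloneqq \sum_{i=1}^n \alpha_i\vx_{\pi(i)}$, we have $F(\vbeta)=\max_{\pi\in\Sn}\big(\sum_i\alpha_i y_{\pi(i)}-\boldsymbol{h}_\pi\T\vbeta\big)$, a maximum of affine forms, so $F$ is unbounded from below if and only if some direction $\vr$ satisfies $\boldsymbol{h}_\pi\T\vr>0$ for every $\pi\in\Sn$ (examine the leading term of $F(\vbeta+t\vr)$ as $t\to+\infty$). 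This is a system of $n!$ inequalities, but for fixed $\vr$ the value $\max_{\pi}(-\boldsymbol{h}_\pi\T\vr)$ is computable in $O(n\log n)$ time by sorting the numbers $-\vx_i\T\vr$, exactly as in Observation~\ref{obs:polytime}. This yields a polynomial separation oracle for the relevant cone, so membership in $P$ follows by the ellipsoid method (or from the polynomial-size linear description obtained later in the paper).

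For $P$-hardness I would mimic Proposition~\ref{pro:p:completeness} and take $\alpha_1=\dots=\alpha_{n-1}=0$, $\alpha_n=1$, so that $F(\vbeta)=\max_i(y_i-\vx_i\T\vbeta)$ and $\boldsymbol{h}_\pi=\vx_{\pi(n)}$ ranges over all rows of $X$. By the criterion above, $F$ is unbounded from below exactly when the strict homogeneous system $X\vr>\vnull$ is feasible, so it suffices to reduce a $P$-complete problem to feasibility of $X\vr>\vnull$. I would reduce from feasibility of $X\vbeta\ge\vy$, the same $P$-complete problem used in Proposition~\ref{pro:p:completeness}: introducing a scalar variable $t$, I would match $X\vbeta\ge\vy$ with strict feasibility of the matrix $\tilde X$ obtained by stacking $[\,X\mid -(\vy-\delta\vone)\,]$ on top of the row $(\vnull\T,1)$, so that $\tilde X(\vbeta,t)>\vnull$ means $X\vbeta>t(\vy-\delta\vone)$ together with $t>0$, i.e. $X(\vbeta/t)>\vy-\delta\vone$. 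The produced instance is then $(\tilde X,\vnull)$ with the same one-hot $\alpha$, and by the first paragraph its $F$ is unbounded from below iff $\tilde X(\vbeta,t)>\vnull$ is feasible.

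The step I expect to be the main obstacle is reconciling the \emph{strict} condition $X\vr>\vnull$ that governs unboundedness with the \emph{non-strict} feasibility of $X\vbeta\ge\vy$: naive homogenization of $X\vbeta\ge\vy$ produces strict feasibility of the original system, which differs from non-strict feasibility precisely on boundary instances, so the two cannot be equated directly. I would close this gap by a perturbation valid for rational data, choosing a single $\delta>0$ with polynomially many bits that lies strictly below the least positive value attainable by a Farkas infeasibility certificate (bounded away from $0$ by the standard bit-size estimate for vertices of the certificate polyhedron). For such $\delta$ one checks that $X\vbeta\ge\vy$ is feasible if and only if $X\vbeta>\vy-\delta\vone$ is feasible, and the latter is exactly the strict-feasibility condition homogenized above. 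Verifying that this $\delta$ is log-space computable and that the equivalence holds in both directions is the technical crux; once it is in place, the reduction is complete, and the Gordan-type reading of $X\vr>\vnull$ as non-membership of $\vnull$ in the convex hull of the rows of $X$ is merely an alternative way to phrase the same argument.
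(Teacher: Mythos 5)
Your proof is correct, but your hardness reduction takes a genuinely different route from the paper's. The paper reduces from strict solvability of a \emph{homogeneous} system $X\vbeta>\vnull$, which it invokes as a known P-complete problem: with $\alpha=(0,\dots,0,1)$ and $\vy=\vnull$, the LP form \eqref{eq:min:obs:LPform} becomes $\min t$ subject to $X\vbeta+t\vone\geq\vnull$, which is unbounded iff $X\vbeta>\vnull$ is solvable, so the reduction is essentially the identity map on $X$ and no perturbation is needed. You instead reduce from non-strict feasibility of $X\vbeta\geq\vy$ (the same source problem as Proposition~\ref{pro:p:completeness}), homogenize with an extra coordinate $t>0$, and repair the strict/non-strict mismatch with a bit-size perturbation $\delta$; your Farkas argument (normalize the certificate by $\vone\T\boldsymbol{\lambda}=1$, take a vertex of the certificate polytope, bound $\vy\T\boldsymbol{\lambda}$ away from zero by the standard denominator estimate) is sound, and $\delta$, being a fixed negative power of two determined by the input length, is log-space computable, so the reduction is legitimate --- just longer and more delicate than necessary. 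What your version buys is self-containedness: it needs only the one P-completeness fact already used in Proposition~\ref{pro:p:completeness}, whereas the paper's version relies additionally on P-completeness of strict homogeneous feasibility but avoids all perturbation issues. Your recession-direction characterization of unboundedness and the sorting-based separation oracle for membership in P are also correct; the paper does not argue membership explicitly, leaving it to the polynomial algorithm of \cite{cerny:2019:classoptimizationproblemsa}.
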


\begin{proof}
We use the fact that checking strict solvability of a homogeneous system of linear inequalities $X\vbeta>0$ is known to be P-complete.     Solvability of the system is equivalent to solvability of $X\vbeta+t\vone\geq 0$, $t<0$. Put $\alpha_1=\dots=\alpha_{n-1}=0$ and $\alpha_n=1$. Since \eqref{eq:min:obs:LPform} is always feasible, we get that it is unbounded if and only if there are $t,\vbeta$ such that
$$
t<0,\ \ t \geq \sum_{i=1}^n \alpha_{i} (- \vx_{\pi(i)}\T\vbeta)\quad \forall \pi\in\Sn,
$$
which reduces to $X\vbeta+t\vone\geq 0$, $t<0$.
\end{proof}\qed

\subsection{Our contribution and related work}
\label{sect:justification}

\paragraph{Our WoA algorithm versus a polynomial algorithm based on the ellipsoid method.}
In this text we propose method for minimizing $F$, called
\emph{Walk-on-Arrangement (``WoA'') Algorithm}, 
which solves the problem using at most $O(n^{2p-2})$ auxiliary LPs.  
It follows that if 
\begin{equation}
p = O(1),\label{eq:poone}
\end{equation} 
then WoA yields a polynomial method. 

This is a weak result compared to the polynomiality theorem from \cite{cerny:2019:classoptimizationproblemsa}, where a (theoretically) efficient algorithm based on the ellipsoid method was designed.
The crucial question is: \emph{Does WoA make sense when we already have an unconditionally polynomial method?}  

The situation is similar to what has been well known in linear programming since 1979's Khachiyan's Theorem.
Does the simplex method with exponential worst-case complexity make sense once we have the ellipsoid method which works in polynomial time? The polynomial method for minimization of $F$ from
\cite{cerny:2019:classoptimizationproblemsa} is based on two procedures: the ellipsoid method 
for oracle-given polyhedra \cite{grotschel:1993:Geometricalgorithmscombinatorial} and Diophantine approximation \cite{schrijver:2000:TheoryLinearInteger}. Both of these procedures, although polynomial in theory, are extremely hard to implement for numerical reasons: the computation involves rational numbers with huge bit sizes. And the poor practical behavior of the ellipsoid method, well known from computational studies (see e.g.~\cite{gill:1980:NumericalInvestigationEllipsoid}), manifests itself in case of \cite{cerny:2019:classoptimizationproblemsa} as well. So it makes sense to have another method for minimization of $F$, possibly with superpolynomial worst-case bound, but with a potential to work better in many practical cases and with less serious numerical problems than those concerning the ellipsoid methods and Diophantine approximation. 

\paragraph{Gradient-descent methods}
Since $F$ is convex it is natural to try to minimize $F$ with gradient-based methods. 
If we are given a point $\vbeta$ and the function $F$ is smooth in~$\vbeta$, the situation is easy.
The main problem is what to do in a point $\vbeta$ where $F$ is not smooth. It is not obvious how to find a representative of the subgradient of $F$ 
in~$\vbeta$. One of the main contributions of this paper is a construction showing that the subgradient can be described by a linear system with polynomially bounded size.
In fact, the system is as small as $O(n^2)$. (As far as we are aware, this is an original---and possibly surprising---result; the key idea to achieve such a compact representation of the subgradient is found in Claim C in the proof of Lemma~\ref{lem:Lthree}.)

Comparison of WoA with gradient-descent methods is an important issue. We will return to it in Section~\ref{sect:gradient}, where we construct an example showing that 
the behavior of (a version of) gradient-descent search can be significantly poorer than the behavior of the WoA algorithm. Moreover, we will show that testing whether a given $\beta$ is a minimizer of $F$ requires essentially the same techniques as utilized by the WoA algorithm.

\subsection{Motivation from statistics}\label{sect:da}

We conclude the introductory section by a short summary from mathematical statistics showing the importance of $F(\vbeta)$ in robust regression. This is the main source of motivation and the main area of applications. For example, it could make sense to implement WoA as an R-package.

Consider the linear regression relationship
$$
\mathsf{E}[y|\vx] = \vx\T\vbeta^*,
$$
where $y$ is the dependent variable, $\vx \in \RR^p$ is the vector of explanatory variables and $\mathsf{E}[y|\vx]$ stands for the conditional expectation. We measure $n$ observations $(y_i, \vx_i)_{i=1,\dots,n}$ and the task is to estimate the unknown vector $\vbeta^*\in\RR^p$ of regression parameters. In matrix notation we have 
$\mathsf{E}[\vy|X] = X\vbeta^*$, where $\vy = (y_1, \dots, y_n)\T$ and $X$ is the matrix with rows $\vx\T_1, \dots, \vx\T_n$. There exist various estimators of $\vbeta^*$, such as least squares $\vbeta_{\text{LS}} = (X\T X)^{-1}X\T \vy$, which ``works well'' if $X$ has full column rank and the vector $\vvarepsilon = (\varepsilon_1, \dots, \varepsilon_n)\T$ 
of disturbances $\varepsilon_i \coloneqq y_i - \vx_i\T\vbeta^*$ satisfies additional assumptions, such independence, identical distribution and 
$\mathsf{E}[\vvarepsilon|X] = 0$.
Least squares is a prominent example of estimators based on \emph{minimization of loss functions}. Indeed, $\vbeta_{\text{LS}}$ minimizes the loss function $F(\vbeta) = \sum_{i=1}^n (y_i - \vx_i^T\vbeta)^2$ (known as \emph{residual sum of squares}). 

Rank estimators, or R-estimators for short, are based on minimization of different loss functions. 
These estimators have been designed as
robust estimators for the case when data $(y_i, \vx_i)_{i=1,\dots,n}$
are contaminated by outliers. (This is one of practically frequent cases when the assumptions necessary for the ``good behavior'' of $\vbeta_{\text{LS}}$ are violated.) The R-estimator $\vbeta_{\mathrm{R}}$ can be formulated as follows. Fix a \emph{score function} $\varphi(\xi)$ defined on $(0,1)$ satisfying: (i)~$\varphi$ is nondecreasing, (ii)~$\varphi(\frac{1}{2}+\xi) = -\varphi(\frac{1}{2}-\xi)$ for all $0 \leq \xi < \frac{1}{2}$, and (iii)~$\int_{0}^1 \varphi^2(\xi)\ \mathrm{d}\xi= 1$. If $n$ observations are available, set 
\begin{equation}
\alpha_i = \varphi\left(\frac{i}{n+1}\right), \quad i = 1, \dots, n.
\label{eq:score}
\end{equation}

Then, define a loss function $F(\vbeta)$ (which will be subsequently minimized) by this procedure: given a candidate 
estimate $\vbeta$, compute the residuals $e_i^{\vbeta} = y_i - \vx_i\T\vbeta$, 
sort them and output their weighted sum $F(\vbeta) = \sum_{i=1}^n w_i e_i^{\vbeta}$, where the weights $w_i$ \emph{depend on the ranks of the residuals in the sorted sequence}. (This justifies the name ``rank estimator''.) The smallest residual is assigned weight 
$\alpha_1$, the second smallest residual is assigned weight $\alpha_2$ and so on. For example, if we are given $\vbeta\in\RR^p$ and we find out that $e_5^{\vbeta} \leq e_1^{\vbeta} \leq e_4^{\vbeta} \leq e_3^{\vbeta} \leq e_2^{\vbeta}$, then $F(\vbeta) = \alpha_1 e_5^{\vbeta} + \alpha_2 e_1^{\vbeta} + \alpha_3 e_4^{\vbeta} + \alpha_4 e_3^{\vbeta} + \alpha_5 e_2^{\vbeta}$. This procedure defines a function $F(\vbeta)$. Its minimizer is the R-estimator $\vbeta_{\mathrm{R}}$.

Using more formal notation, we get the following formulation. Given $\vbeta \in \RR^p$, find $\pi^* \in \Pb$ 
and define 
$F(\vbeta) = \sum_{i=1}^n \alpha_{i} (y_{\pi^*(i)} - \vx_{\pi^*(i)}\T\vbeta)$.
This is the loss function to be minimized. It is easy to see that
$F(\vbeta)$ is well-defined---indeed, if $|\Pb|\geq 2$, then the procedure generates a value $F(\beta)$ independent of the choice of $\pi^*\in\Pb$. Not surprisingly, $F(\vbeta)$
is exactly what we saw in Observation~\ref{obs:polytime}, and thus
the loss function is exactly $F(\vbeta)$ as defined in (\ref{eq:defF}).

\begin{remark}[Examples of score functions.] Various score functions
are studied and used. Some of well-known examples include
the \emph{sign score function} 
$\varphi(\xi) = \text{sgn}(\xi-\frac{1}{2})$, 
\emph{Wilcoxon score function}
$\varphi(\xi) = \sqrt{12}(t-\frac{1}{2})$
and \emph{van der Waerden score function}
$\varphi(\xi) = \Phi^{-1}(t-\frac{1}{2})$, where
$\Phi^{-1}$ is the quantile function of $N(0,1)$.
\end{remark}

\begin{remark} In statistical literature, various assumptions
are needed to achieve good statistical properties of R-estimators (such as low finite-sample bias, consistency etc.). 
From the optimization viewpoint, such assumptions are not necessary.
For example, we do not need the usual assumption that $X$, the matrix of regressors, has full column rank. 
In this sense, our model is a bit more general than the one used in statistics.

\end{remark}

\begin{remark}[Complexity.]
    In applications of regression modeling (see e.g. \cite{fox:2008:AppliedRegressionAnalysis,wooldridge:2012:IntroductoryEconometricsModern} for basic references),
it is often happens that $n \gg p$. Except for some special situations, such as model selection problems, we usualy have a moderate number $p$ of regression parameters and a larger number $n$ of observations (in asymptotic theory one even takes $n \rightarrow \infty$). This shows that the assumption (\ref{eq:poone}) can be regarded as realistic, at least in some situations. Recall that (\ref{eq:poone}) implies that WoA is a polynomial-time algorithm as discussed in Section~\ref{sect:justification}. 
\end{remark}

\section{The Walk-on-Arrangement (WoA) method for minimization of $F(\vbeta)$}

The WoA algorithm will be presented in Section~\ref{sect:thealgo}. First, in Sections \ref{sect:geometry} and \ref{sect:permutation}, we sketch some geometric and algebraic notions utilized by the algorithm.

\begin{figure}[tb]
\centering
\includegraphics[width=\textwidth]{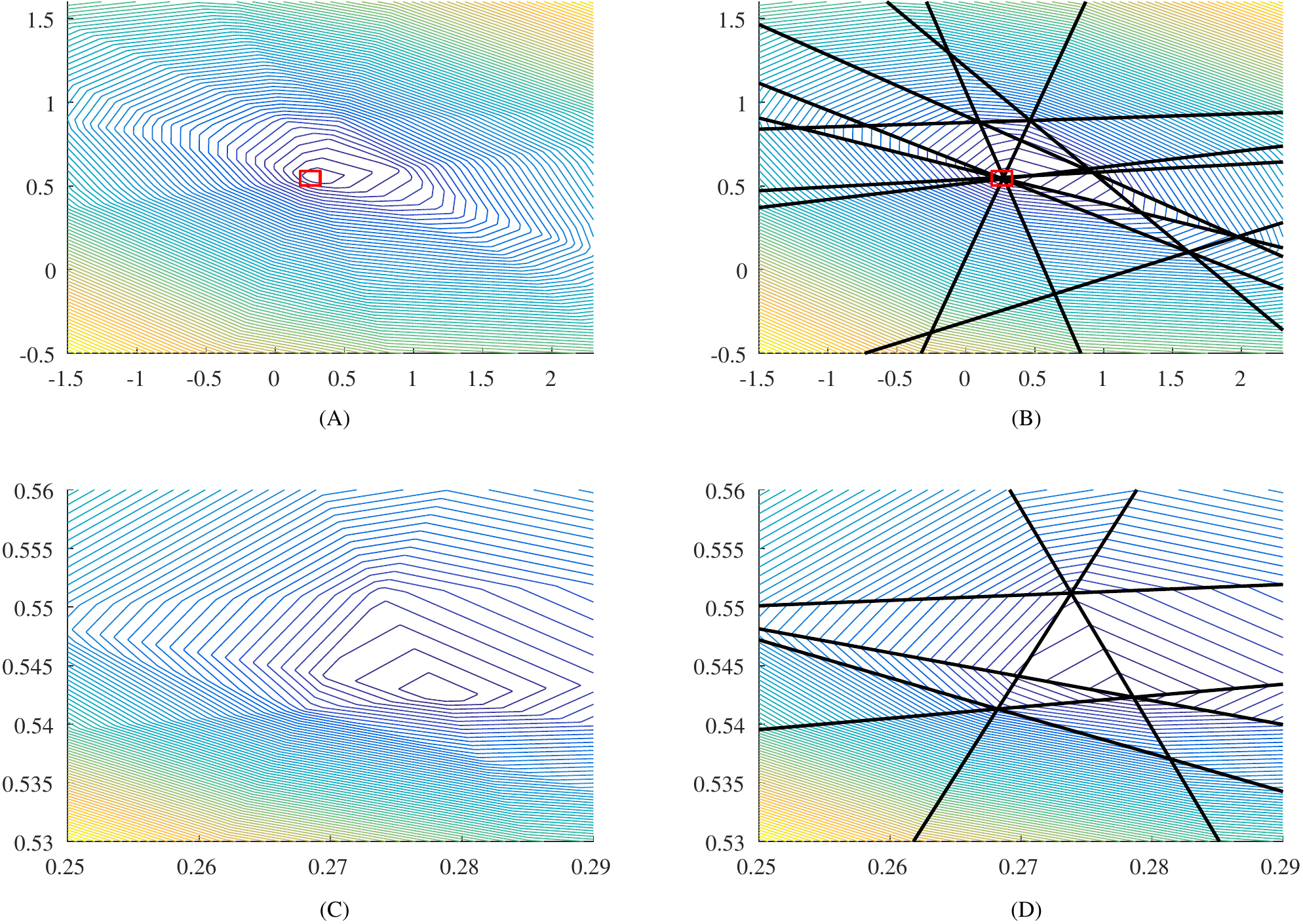}
\caption{(A):~Contour map of a sample functon $F(\vbeta)$ with $p = 2$ and $n = 5$. 
 (B):~The same contour map and the arrangement $\mathfrak{A}$ formed by $N=10$ hyperplanes 
$H_{ij}$, $1 \leq i < j \leq 5$. 
 (C, D): Zoom of the neighbourhood of the minimizer in the red box from (A, B). (In this example, the minimizer is unique and is attaned in a point $\vbeta$ satisfying $e^{\vbeta}_{i_1} = e^{\vbeta}_{i_2} = e^{\vbeta}_{i_3}$ for some $i_1 < i_2 < i_3$. In general, the polyhedron of minimizers can have any dimension.)} 
\label{fig:one}
\end{figure}

\subsection{Geometry of $F(\vbeta)$}\label{sect:geometry}

There is a partition of $\RR^p$ into a finite number of 
polyhedral regions such that $F(\vbeta)$ is linear on each of the regions. Let us formalize this geometric insight more precisely. 

For $1 \leq i < j \leq n$ define 
$$
H_{ij} \coloneqq \{\vbeta\in\RR^p\ |\ y_i - \vx_i\T\vbeta = y_j - \vx_j\T\vbeta\}. 
$$
The system $\{H_{ij}\}_{1 \leq i < j \leq n}$
defines a hyperplane arrangement $\mathfrak{A}$ (see Figure~\ref{fig:one} for an example). The arrangement
can be seen as a partition of $\RR^p$ into polyhedral regions---the connected regions of $\RR^p \setminus \bigcup_{i,j} H_{ij}$, or (by convention) their closures.

Given a permutation $\pi \in \Sn$, define
$$
C^{\pi} \coloneqq \{\vbeta\in\RR^p\ |\ 
y_{\pi(1)} - \vx_{\pi(1)}\T\vbeta \leq
y_{\pi(2)} - \vx_{\pi(2)}\T\vbeta \leq
\cdots
\leq 
y_{\pi(n)} - \vx_{\pi(n)}\T\vbeta\}.
$$
Obviously, the set $C^{\pi}$ is a convex polyhedron. 

\begin{definition}\label{def:cell}
If $C^{\pi} \neq \emptyset$, it is called \emph{a cell} (of $\mathfrak{A}$ corresponding to the permutation~$\pi$).
\end{definition}

A cell can be bounded or unbounded. It can have full dimension or may be dimension-deficient (this can happen e.g.~if there are $(i,j) \neq (i',j')$ such that $H_{ij} = H_{i',j'}$).

\begin{remark}
The notion ``cell of an arrangement'' may vary in literature, cf.~\cite{edelsbrunner:1986:Constructingarrangementslines} for example. Here we used a definition making the forthcoming theory as simple as possible, with no need to handle possible degenerate cases as exceptions. To avoid misunderstanding: a closure of a connected region of
$\RR^p \setminus \bigcup_{i,j} H_{ij}$ is always a cell in the sense of Definition~\ref{def:cell}, but the opposite implication need not hold true. 
\end{remark}

Obviously, the system of cells covers the entire space $\RR^p$. The system of full-dimensional cells can be regarded as another representation of $\mathfrak{A}$.

The system of cells plays an important role because $F$ is a cell-wise linear function (meaning that restriction of the domain of $F$ onto a cell produces a linear function). 

We also get the natural correspondence
$$
\text{$\pi \in \Pb$ 
\ \ \ $\Longleftrightarrow$ \ \ \ $\vbeta \in C^{\pi}$.}
$$

Typically, for many permutations $\pi\in\Sn$ we have $C^{\pi} = \emptyset$, as shown by the following lemma.

\begin{lemma}[a corollary of \cite{buck:1943:Partitionspace} and \cite{zaslavsky:1975:FacingarrangementsFacecount}]
An arrangement in $\RR^p$ with $N$ hyperplanes has at most $2\sum_{i=0}^{p-1}\binom{N-1}{i} = O(N^{p-1})$ cells.
\end{lemma}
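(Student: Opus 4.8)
The plan is to bound the number of cells of the arrangement $\mathfrak{A}$ formed by $N$ hyperplanes in $\RR^p$ by citing the classical theory of hyperplane arrangements. The cited works of Buck and Zaslavsky give exactly the kind of counting result needed: the number of regions (full-dimensional faces) into which $N$ hyperplanes partition $\RR^p$ is at most $\sum_{i=0}^{p}\binom{N}{i}$, with the bound attained in general position. First I would recall that in the generic case each cell $C^\pi$ of full dimension corresponds to a region of $\RR^p\setminus\bigcup_{i,j}H_{ij}$, and that the count of such regions is a purely combinatorial quantity depending only on $N$ and $p$.

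The main step is to translate the region-count formula into the stated form $2\sum_{i=0}^{p-1}\binom{N-1}{i}$. The key identity I would invoke is the Pascal-type recurrence $\binom{N}{i}=\binom{N-1}{i}+\binom{N-1}{i-1}$, which lets one rewrite $\sum_{i=0}^{p}\binom{N}{i}$ after telescoping. Concretely, I expect the factor of $2$ and the shift to $N-1$ to emerge from splitting the sum and reindexing, so that the maximum number of full-dimensional regions of an arrangement of $N$ hyperplanes in $\RR^p$ matches $2\sum_{i=0}^{p-1}\binom{N-1}{i}$. The asymptotic conclusion $O(N^{p-1})$ then follows immediately, since the dominant term is $\binom{N-1}{p-1}=\Theta(N^{p-1})$ while $N$ and $p$ satisfy $p\le N$ in the regime of interest.

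The one subtlety I would need to address carefully is that Definition~\ref{def:cell} admits cells that are \emph{not} full-dimensional regions: a cell $C^\pi$ may be dimension-deficient, for instance when distinct index pairs yield coinciding hyperplanes $H_{ij}=H_{i',j'}$, or when $C^\pi$ lies on a lower-dimensional face of the arrangement. Thus the lemma's notion of ``cell'' is broader than the classical notion of ``region'', and I must confirm that counting \emph{all} faces of all dimensions—not just the top-dimensional ones—still obeys the same $O(N^{p-1})$ bound. The hard part will be justifying that including lower-dimensional faces does not inflate the asymptotic order: one argues that the total number of faces of all dimensions in an arrangement of $N$ hyperplanes in $\RR^p$ is still $O(N^{p-1})$ for fixed $p$, since each face is obtained by intersecting some subset of the hyperplanes (of size at most $p$) and the number of such combinatorially distinct intersections is polynomially bounded in $N$ of degree at most $p-1$ once the leading region count is accounted for.

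To make this precise I would either cite the stronger form of Zaslavsky's theorem (which counts faces of every dimension via the characteristic polynomial of the intersection lattice) or give a direct injection argument sending each cell $C^\pi$ to an adjacent full-dimensional region, thereby reducing the general count to the region count already established. Since every nonempty $C^\pi$ is the closure of, or a face bounding, at least one full-dimensional region, and each region has only boundedly many faces (the bound depending on $p$ but not on $N$ beyond the stated order), the total is absorbed into the same $O(N^{p-1})$ estimate. With that observation in place, the explicit constant $2\sum_{i=0}^{p-1}\binom{N-1}{i}$ from the region formula serves as the clean upper bound, completing the argument.
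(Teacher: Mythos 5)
The paper gives no proof of this lemma beyond the bracketed citation, so the only thing to assess is whether your derivation is sound---and its central step fails. Pascal's identity does not turn the general-position region count into the stated bound: it gives $\sum_{i=0}^{p}\binom{N}{i} = 2\sum_{i=0}^{p-1}\binom{N-1}{i} + \binom{N-1}{p}$, so the number of regions of $N$ hyperplanes in general position exceeds the lemma's formula by $\binom{N-1}{p}$, which is exactly Buck's count of the \emph{bounded} regions and is of order $\Theta(N^{p})$. No reindexing or telescoping removes that term, and it also destroys the asymptotics, since $\sum_{i=0}^{p}\binom{N}{i}=\Theta(N^{p})$, not $O(N^{p-1})$. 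The quantity $2\sum_{i=0}^{p-1}\binom{N-1}{i}$ is Buck's formula for the number of \emph{unbounded} regions, equivalently the maximal number of regions of a \emph{central} arrangement (all hyperplanes through a common point). Already for $p=1$ your route gives $N+1$ intervals for $N$ points on a line against the claimed bound of $2$. So the plan ``bound the cells by the total region count of an arbitrary arrangement'' cannot establish the lemma as stated; a correct argument would have to exploit centrality or an equivalent special structure, which your write-up never does.

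Your secondary argument about dimension-deficient cells is also quantitatively off: the total number of faces of all dimensions of an arrangement of $N$ hyperplanes in $\RR^p$ is $\Theta(N^{p})$ for fixed $p$ (there can be $\binom{N}{p}$ vertices alone), so the claim that the combinatorially distinct intersections contribute only degree $p-1$ is false; the injection of each cell into an incident full-dimensional region is fine for matching the order of the region count, but that order is $N^{p}$. Be aware that the mismatch is not only yours: the arrangement $\mathfrak{A}$ of the paper is not central in general (for $p=1$, $n=3$ and generic $\vy$ the hyperplanes $H_{12},H_{13},H_{23}$ are three distinct points, giving four cells against the bound $2\sum_{i=0}^{0}\binom{2}{i}=2$), so the bound one can actually extract from Buck and Zaslavsky without extra hypotheses is $\sum_{i=0}^{p}\binom{N}{i}=O(N^{p})$, i.e.\ $O(n^{2p})$ cells of $\mathfrak{A}$, not $O(n^{2p-2})$.
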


Now, $\mathfrak{A}$
has $N = \binom{n}{2} = O(n^2)$
hyperplanes, and thus 
\begin{equation}
\text{the number of cells of $\mathfrak{A}$} = O(n^{2p-2}). 
\label{eq:noc}
\end{equation}

\subsection{Permutations}
\label{sect:permutation}

Recall that a \emph{permutation matrix} $P_{\pi}$ corresponding to a permutation $\pi \in \Sn$ is defined as 
$$
(P_{\pi})_{ij} = \left\{
\begin{array}{ll}
1 & \text{if $\pi(i) = j$}, \\
0 & \text{otherwise}.
\end{array}
\right.
$$ 

\begin{definition}
\begin{itemize}
\item[(a)] An index pair $(i,j) \in \{1, \dots, n\}^2$ is called
\emph{$\vbeta$-active} if $\exists \pi \in \Pb$ such that $\pi(i) = j$. Otherwise
the pair is \emph{$\vbeta$-inactive}.
\item[(b)] The set of $\vbeta$-active index pairs is denoted by $A(\vbeta)$
and the set of $\vbeta$-inactive index pairs is denoted by $\overline{A}(\vbeta)$.
\end{itemize}
\end{definition}

A $\vbeta$-active index pair has the following interpretation:
for a $\pi\in\Pb$, the corresponding permutation matrix 
$P_{\pi}$ can have ones only in positions $(i,j) \in A(\vbeta)$.
More formally we can say that 
$\sum_{\pi\in\Pb} (P_{\pi})_{ij} = 0$ iff $(i,j) \in \overline{A}(\vbeta)$.

Recall also that a square matrix is \emph{bistochastic}
(or \emph{doubly stochastic}) if all elements are nonnegative and all columns and rows sum up to one.
The following property---Birkhoff's Theorem~\cite{birkhoff:1946:Threeobservationslinear}---will be useful:
\emph{every bistochastic matrix results as a convex combination of some permutation matrices}. More precisely: 

\begin{lemma}[Birkhoff's Theorem \cite{birkhoff:1946:Threeobservationslinear}] If $G$ is a bistochastic matrix, then there exists a number $M \geq 1$, a system of permutations 
$\pi_1, \dots, \pi_{M} \in \Sn$ and coefficients $\lambda_1, \dots, \lambda_M > 0$ satisfying 
$\sum_{m=1}^M \lambda_m = 1$ and $\sum_{m=1}^M \lambda_m P_{\pi_m} = G$.
\label{lem:birkhoff}
\end{lemma}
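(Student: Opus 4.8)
The plan is to argue by induction on the number of strictly positive entries of a bistochastic matrix. The engine of the whole argument is the following claim: every bistochastic matrix $G$ admits a permutation $\pi \in \Sn$ whose support lies in the support of $G$, that is, $G_{i,\pi(i)} > 0$ for every $i$. Granting this claim, the induction runs as follows. Set $\lambda = \min_i G_{i,\pi(i)}$, which is positive by the claim. If $\lambda = 1$, then every $G_{i,\pi(i)} = 1$ and, since the rows already sum to one, $G = P_\pi$ and we are done with $M = 1$. Otherwise form $G' = G - \lambda P_\pi$. By the choice of $\lambda$ the matrix $G'$ is nonnegative, and subtracting $\lambda P_\pi$ lowers every affected row and column sum by exactly $\lambda$, so all row and column sums of $G'$ equal $1 - \lambda$; hence $(1-\lambda)^{-1} G'$ is again bistochastic. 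Crucially, the entry attaining the minimum becomes $0$ while no positive entry turns negative, so $G'$ has strictly fewer positive entries than $G$. The induction hypothesis applied to $(1-\lambda)^{-1} G'$ yields a representation $\sum_m \mu_m P_{\sigma_m}$ with $\mu_m > 0$ summing to one, and then $G = \lambda P_\pi + (1-\lambda)\sum_m \mu_m P_{\sigma_m}$ is the desired convex combination, after merging repeated permutations and discarding any zero coefficient.

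For the base case, observe that a bistochastic $n \times n$ matrix has at least $n$ positive entries, since the claim already exhibits $n$ of them. If it has exactly $n$, then each of the $n$ rows carries exactly one positive entry; that entry must equal the row sum $1$, and the permutation $\pi$ supplied by the claim places these ones in distinct columns, so $G = P_\pi$ is itself a permutation matrix, settled with $M = 1$.

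It remains to establish the claim, and this is the only genuinely combinatorial step. I would phrase it as the existence of a perfect matching in the bipartite graph on rows and columns in which row $i$ is joined to column $j$ precisely when $G_{ij} > 0$; such a matching is exactly a permutation $\pi$ with $G_{i,\pi(i)} > 0$ for all $i$. By the classical Hall marriage theorem it suffices to verify Hall's condition: for every set $S$ of rows, the set $N(S)$ of columns meeting some row of $S$ in a positive entry satisfies $|N(S)| \ge |S|$. This follows from a mass-counting argument. The total of all entries in the rows of $S$ equals $|S|$, because each such row sums to one; all of this mass sits in the columns of $N(S)$; but each column of $G$ sums to one, so those columns can hold at most $|N(S)|$ in total. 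Hence $|S| \le |N(S)|$, Hall's condition holds, and the required $\pi$ exists.

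I expect the mass-counting verification of Hall's condition to be the conceptual crux: once the matching exists, the subtract-a-permutation-and-recurse mechanism is routine, and termination is guaranteed because the number of positive entries strictly decreases. An alternative and essentially equivalent route is to identify the bistochastic matrices with the Birkhoff polytope and to show that its vertices are precisely the permutation matrices, after which Carath\'eodory's theorem gives the convex decomposition; but that argument ultimately rests on the same matching and extreme-point dichotomy, so the inductive proof above is the more self-contained option.
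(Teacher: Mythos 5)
Your proof is correct. Note, however, that the paper does not prove this lemma at all: it is Birkhoff's classical theorem, stated with a citation to Birkhoff (1946) and used as a black box, so there is no in-paper argument to compare against. What you have written is the standard self-contained proof: Hall's marriage theorem (verified by the mass-counting argument, which is sound --- the rows of $S$ carry total mass $|S|$, all of it lodged in columns of $N(S)$, whose total capacity is $|N(S)|$) produces a permutation supported on the positive entries; subtracting $\lambda P_\pi$ with $\lambda = \min_i G_{i,\pi(i)}$ strictly reduces the number of positive entries while preserving nonnegativity and equal row/column sums; and induction on the support size terminates at a permutation matrix. All the details check out, including the base case (exactly $n$ positive entries forces $G = P_\pi$) and the observation that the rescaled remainder $(1-\lambda)^{-1}G'$ is again bistochastic. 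Your closing remark about the extreme points of the Birkhoff polytope is also accurate; that route is equivalent but, as you say, less self-contained.
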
 

\begin{remark}
It is interesting that $M$ can be always taken as small as $O(n^2)$. (We do not need this bound in the forthcoming theory. We simply couldn't resist the temptation to mention this beautiful fact.)
\end{remark}

\subsection{The algorithm}\label{sect:thealgo}

In this section we describe the WoA algorithm formally. The proof of correctness and in-depth discussion is the merit of Section \ref{sect:corrc}. In Section 2.4 we illustrate the geometry behind the algorithm.

\vspace{.2cm}

\leftskip0cm\rightskip0cm
\noindent
\textsc{{Walk-on-Arrangement (WoA) Algorithm.}} 

\textbf{Input:} Data $X \in \RR^{n \times p},
\vy \in \RR^{n}, \valpha\in\RR^{n}$ s.t.~$\alpha_1 \leq
\alpha_2 \leq \cdots \leq \alpha_n$ and an initial 
point $\vbeta \in \RR^p$.
 
\textbf{Step~1~(Sorting).}
Sort the residuals $e_1^{\vbeta}, \dots, e_n^{\vbeta}$
and
find (any) permutation $\pi \in \Pb$.

\textbf{Step~2~(Minimization over the cell $C^\pi$).}
Solve the linear programming problem
\begin{equation}
\min_{\widetilde\vbeta\in\RR^p} \sum_{i=1}^n \alpha_{i}(y_{\pi(i)} - \vx_{\pi(i)}\T\widetilde\vbeta)
\quad \text{s.t.}\quad \widetilde\vbeta \in C^{\pi}. 
\label{eq:LPI}
\end{equation}

\textbf{Step~3~(Unboundedness test I).}
If (\ref{eq:LPI}) is unbounded, then \texttt{stop}, function $F$ is unbounded from below. 

\textbf{Step~4~(Move to the argmin of (\ref{eq:LPI})).}
Let $\vbeta^*$ be (any) optimal solution of (\ref{eq:LPI}).
Construct the set $A(\vbeta^*)$ of $\vbeta^*$-active index pairs. 

\textbf{Step~5~(Auxiliary linear system).} 
Use linear programming to solve the system 
\begin{subequations}
    \label{eq:auxLP}
\begin{align}
\alpha_i \vx_{j}\T\vell +  s_i + r_j &\geq \phantom{-}0\quad\quad \forall (i,j) \in A(\vbeta^*),
\label{eq:auxLPone}
\\
\sum_{i=1}^n s_i + \sum_{j=1}^n r_j &= -1 
\label{eq:auxLPtwo}
\end{align}
\end{subequations}
with variables $\vell \in \RR^p, \vr \in \RR^n, \vs \in \RR^n$.

\textbf{Step~6~(Optimality test).}
If the system (\ref{eq:auxLPone}b) is infeasible, 
then \texttt{stop}, $\vbeta^*$ is a minimizer.
Otherwise, let $(\vell^*, \vr^*, \vs^*)$ be (any) solution
of (\ref{eq:auxLPone}b).

\textbf{Step~7~(Using $\vell^*$ as an improving direction).}  
For all $1 \leq i < j \leq n$ compute 
$$
d_{ij} \coloneqq \left \{
\begin{array}{ll}
-\infty & \text{if $\vx_j\T\vell^* = \vx_i\T\vell^*$,} \\
\frac{y_j - \vx_j\T\vbeta^* - (y_i - \vx_i\T\vbeta^*)}{\vx_j\T\vell^* - \vx_i\T\vell^*}
 & \text{otherwise} 
\end{array}\right.
$$
and set $D \coloneqq \{d_{ij} \ |\ d_{ij} > 0\}$.
Observe that $d_{ij}$ has the following property:
\begin{equation}
    \text{if }d_{ij} \neq -\infty,\text{ then }\vbeta^* + d_{ij}\vell^* \in H_{ij}.
    \label{eq:property:d:ij}
\end{equation}
   
\textbf{Step~8~(Unboundedness test II).} If $D = \emptyset$ then \texttt{stop},
function $F$ is unbounded from below.

\textbf{Step~9~(Line search in direction $\vell^*$ and update of $\vbeta$).} 
Let $d^* \in \text{argmin} 
\{F(\vbeta^* + d\vell^*)\ |\ d \in D\}$
and update $\vbeta \coloneqq \vbeta^*+ d^*\vell^*$.

\textbf{Step~10.}~\texttt{Goto} Step 1. 

\leftskip0cm\rightskip0cm

\subsection{An illustration}\label{sect:illu}

\begin{figure}[tb]
\centering
\includegraphics[scale=1.2]{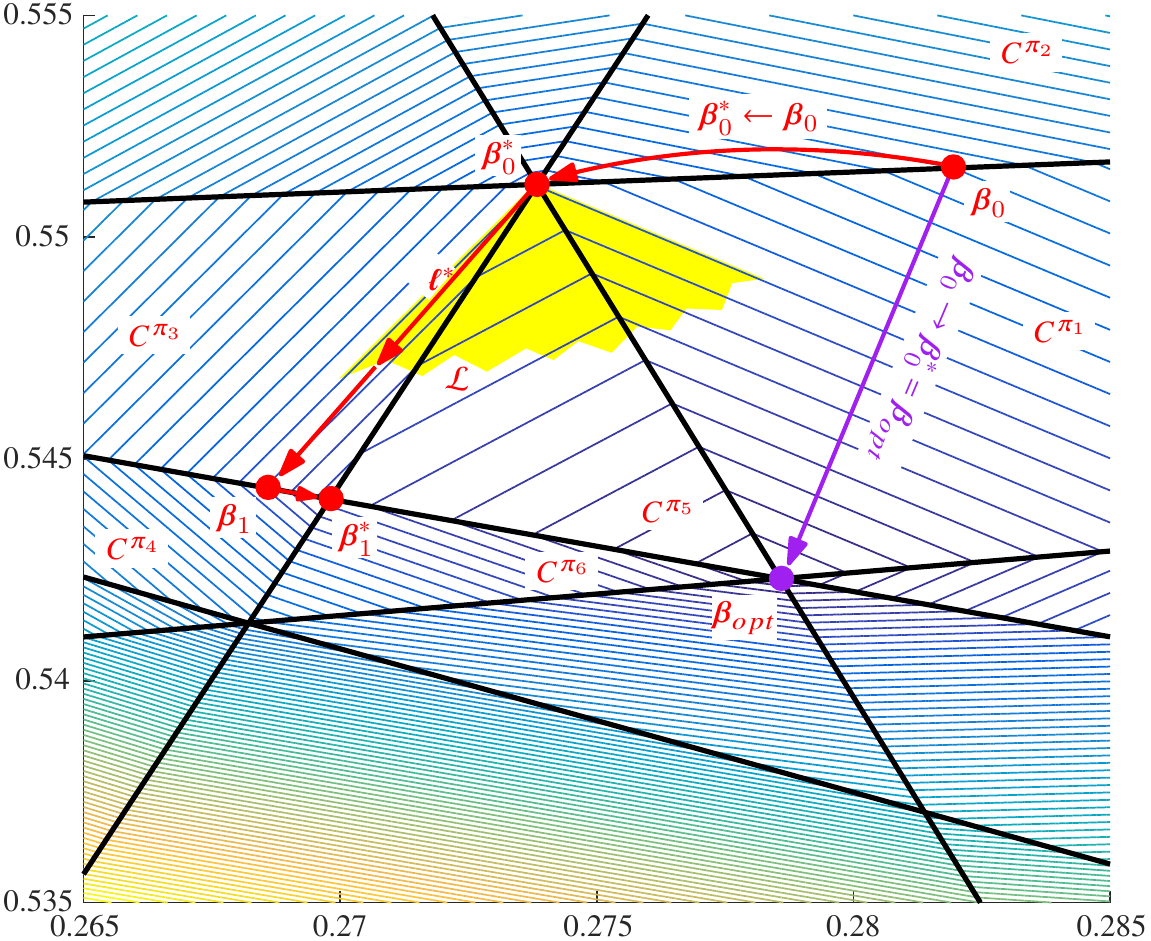}
\caption{An illustration of WoA (for details see Section~\ref{sect:illu}).} 
\label{fig:woa:iterations}
\end{figure}

Figure~\ref{fig:woa:iterations} shows the problem from Figure~\ref{fig:one}(D), where WoA is started from $\vbeta_0$. Here, $\mathcal{P}_n(\vbeta_0) = \{\pi_1,\pi_2\}$,
and thus selection of $\pi \in \mathcal{P}_n(\vbeta_0)$ in Step~1 affects whether the algorithm will continue in cell $C^{\pi_1}$ or in cell $C^{\pi_2}$. 
(So, a particular procedure selecting $\pi$ in Step~1 if multiple exist can be called ``pivoting strategy''.)

\begin{itemize}
\item
If $\pi_1$ is selected, then Step 2 finds $\vbeta^*_0$ (depicted by the purple arrow) and Step~6 determines that a minimizer 
$\vbeta_{opt} = \vbeta^*_0$ has been found.

\item
If $\pi_2$ is selected, then the process goes to $C^{\pi_2}$. Step 2 finds $\vbeta^*_0$ (depicted by the red arrow) and Step~6 determines that $\vbeta^*_0$ is not a minimizer. The yellow cone $\mathcal{L}$ is the cone of improving directions. One of such directions, $\vell^*$, is found in Step~5. Then, Step~9 performs line search in direction $\vell^*$ and finds $\vbeta_1 = \vbeta_0^* + d^*\vell^*$. 

In the next iteration we have $\mathcal{P}_n(\vbeta_1) = \{\pi_3, \pi_4\}$. However, in this case, no matter which one is selected in Step~1, Step~2 will determine the same point $\vbeta^*_1$. 

The procedure then continues in a similar manner (not shown in the figure): an improving direction $\vell^*$ can lead from $\vbeta^*_1$ either to $C^{\pi_5}$ or $C^{\pi_6}$. It is apparent that in both cases, the minimizer will be found in Step~2 and confirmed in Step~6 of the following iteration.  
\end{itemize}

\begin{remark}
We did not specify a particular ``pivoting strategy'' for selection of $\pi\in\Pb$ in Step~1. And, similarly,
we did not give guidance for the choice of the improving direction $\vell^*$ in Step 5 if multiple exist (this can be also seen as a kind of pivoting). Nevertheless, 
the correctness theorem (Theorem~\ref{theo:correctness}, Section~\ref{sect:corrc}) implies that the method converges with \emph{arbitrary} pivoting strategies.
\end{remark}

\subsection{Correctness theorem}\label{sect:corrc}

This section is devoted to the proof of the main result.

\begin{theorem}[correctness]
After at most $O(n^{2p-2})$ iterations, WoA algorithm correctly finds a minimizer of $F$ or states that $F$ is unbounded from below.\label{theo:correctness}
\end{theorem}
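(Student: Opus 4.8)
The plan is to prove the statement in two parts: first, correctness of each termination branch together with strict monotone progress of the non-terminating iterations; second, a counting argument that turns this progress into the iteration bound.

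I would begin with the easy local facts. By Observation~\ref{obs:polytime}, on the cell $C^\pi$ the objective $F$ agrees with the single linear map $\widetilde{\vbeta}\mapsto\sum_{i=1}^n\alpha_i(y_{\pi(i)}-\vx_{\pi(i)}\T\widetilde{\vbeta})$ minimized in \eqref{eq:LPI}; hence Step~2 really returns $\min_{C^\pi}F$. This immediately validates the two unboundedness tests: in Step~3 the restriction $F|_{C^\pi}$ is itself unbounded below, so $F$ is; in Step~8 the ray $\vbeta^*+d\vell^*$ ($d\ge 0$) meets no hyperplane $H_{ij}$, so the residual order is constant along it, $F$ is linear there, and since $\vell^*$ is a descent direction (next paragraph) $F\to-\infty$.

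The crux is the optimality test in Step~6 (essentially Claim~C). Being convex and piecewise linear, $\vbeta^*$ is a minimizer iff it admits no descent direction. The linear pieces active at $\vbeta^*$ are exactly the consistent permutations $\mathcal{P}_n(\vbeta^*)$ (Observation~\ref{obs:polytime}), so the one-sided directional derivative is $F'(\vbeta^*;\vell)=\max_{\pi\in\mathcal{P}_n(\vbeta^*)}\bigl(-\sum_{i}\alpha_i\vx_{\pi(i)}\T\vell\bigr)$, and $\vbeta^*$ fails to be optimal precisely when some $\vell$ satisfies $\sum_i\alpha_i\vx_{\pi(i)}\T\vell>0$ for \emph{all} $\pi\in\mathcal{P}_n(\vbeta^*)$. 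The obstacle is that $\mathcal{P}_n(\vbeta^*)$ may be exponentially large, and I expect removing it to be the main difficulty. The active pairs carry a block structure: grouping indices by equal residual value splits the ranks into consecutive blocks $R_1,\dots,R_m$ and the indices into tie-classes $T_1,\dots,T_m$ with $|R_k|=|T_k|$, and $A(\vbeta^*)=\bigcup_k R_k\times T_k$. Applying Birkhoff's Theorem (Lemma~\ref{lem:birkhoff}) blockwise shows the convex hull of $\{P_\pi:\pi\in\mathcal{P}_n(\vbeta^*)\}$ is exactly the set of bistochastic matrices supported on $A(\vbeta^*)$. Consequently the exponential family of inequalities collapses to requiring $\min_G\sum_{(i,j)\in A(\vbeta^*)}G_{ij}\,\alpha_i\vx_j\T\vell>0$ over bistochastic $G$ supported on $A(\vbeta^*)$. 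Dualizing this (feasible, bounded) transportation LP and writing $s_i=-u_i$, $r_j=-v_j$ for its dual variables converts ``the inner minimum is positive for some $\vell$'' into feasibility of the homogeneous system $\alpha_i\vx_j\T\vell+s_i+r_j\ge0$ on $A(\vbeta^*)$ together with $\sum_i s_i+\sum_j r_j<0$; normalizing the strict inequality to $-1$ yields exactly \eqref{eq:auxLP}. Thus \eqref{eq:auxLP} is infeasible iff $\vbeta^*$ is a minimizer, and any solution supplies a genuine descent direction $\vell^*$, which is what Steps~5--6 assert.

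Finally I would turn descent into the bound. Along the ray, $g(d)\coloneqq F(\vbeta^*+d\vell^*)$ is convex and piecewise linear with breakpoints exactly the crossing parameters $D$ (property~\eqref{eq:property:d:ij}); since $g'(0^+)<0$, the line search of Step~9 reaches a point $\vbeta_{\mathrm{new}}$ with $F(\vbeta_{\mathrm{new}})<F(\vbeta^*)=\min_{C^\pi}F$. Writing $v_k$ for the cell-minimum computed at iteration~$k$ and noting that the permutation chosen next is consistent with $\vbeta_{\mathrm{new}}$, we get $v_{k+1}\le F(\vbeta_{\mathrm{new}})<v_k$, so the sequence $(v_k)$ is strictly decreasing. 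Since $v_k$ equals $\min_C F$ for the cell $C$ processed in iteration~$k$, no cell can be processed twice; hence the number of iterations is at most the number of cells of $\mathfrak{A}$, which is $O(n^{2p-2})$ by \eqref{eq:noc}. The remaining arguments are routine once the blockwise-Birkhoff/LP-duality reduction justifying Step~6 is in place.
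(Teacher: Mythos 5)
Your proof is correct and follows essentially the same route as the paper's: the decisive step is the same Birkhoff-based collapse of the exponentially many consistent permutations into bistochastic matrices supported on $A(\vbeta^*)$, combined with LP duality to obtain the compact system \eqref{eq:auxLP} (the paper's Farkas alternative in Lemma~\ref{lem:Lone} together with Claims A--C of Lemma~\ref{lem:Lthree}), and your termination argument via strictly decreasing cell minima is identical to the paper's. The only differences are organizational --- you package the optimality test as the dual of a transportation LP rather than as a primal--dual Farkas pair, and you additionally prove the converse of the Step~6 test, which the paper defers to its concluding remarks.
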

 
Before we turn our attention to the proof, it will be useful to prove three lemmas. The crucial proof ``trick''---showing how to encode a possibly large number of permutations into a bistochastic matrix---is present in Claim~C inside the proof of Lemma~\ref{lem:Lthree}.

\begin{lemma}\label{lem:Lone}
Let $\vbeta \in \RR^p$.
Exactly one of the systems
(\ref{eq:Done}b)
and
(\ref{eq:Pone}--e)
is feasible: 
\begin{subequations}
\begin{align}
\alpha_i \vx_{j}\T\vell +  s_i + r_j &\geq 0\quad \forall (i,j) \in A(\vbeta), \label{eq:Done}
\\
\sum_{i=1}^n s_i + \sum_{j=1}^n r_j & < 0, \label{eq:Dtwo}
\end{align}
\end{subequations}
where $\vell \in \RR^p$, $\vr \in \RR^n$, $\vs \in \RR^n$ are variables; and
\begin{subequations}
\begin{align}\hspace{2cm}
\sum_{j=1}^n \vx_{j} \sum_{i=1}^n \alpha_i G_{ij} &= \vnull,
&& \hspace{2cm} \label{eq:Pone} \\
\sum_{i=1}^n G_{ij} &= 1 && \forall j = 1, \dots, n, \hspace{2cm} \label{eq:Ptwo}\\
\sum_{j=1}^n G_{ij} &= 1 &&\forall i = 1, \dots, n, \label{eq:Pthree}\\
G_{ij} &\ge 0, && \forall (i,j) \in A(\vbeta), \label{eq:Pfour}\\
G_{ij} &= 0, && \forall (i,j) \in \overline{A}(\vbeta),  \label{eq:Pfive}
\end{align}
\end{subequations}
where $G_{ij}$ ($i,j = 1, \dots, n$) are variables.
\end{lemma}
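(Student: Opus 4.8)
The plan is to recognize this statement as a direct instance of Farkas' lemma (a theorem of the alternative for homogeneous linear systems). First I would observe that system (\ref{eq:Done}b) is a homogeneous feasibility question in the variables $x \coloneqq (\vell, \vr, \vs)$: writing $g_{ij}(x) \coloneqq \alpha_i \vx_j\T\vell + s_i + r_j$ for $(i,j)\in A(\vbeta)$ and $h(x) \coloneqq \sum_{i=1}^n s_i + \sum_{j=1}^n r_j$, the system asks for an $x$ with $g_{ij}(x) \geq 0$ for all active pairs and $h(x) < 0$. Stacking the $g_{ij}$ as rows of a matrix $M$ and identifying $h$ with a vector $c$, this is exactly the standard form ``$\exists x\colon Mx \geq 0,\ c\T x < 0$''.

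The key step is to apply the Farkas alternative in the form: \emph{exactly one} of (I)~$\exists x$ with $Mx \geq 0$ and $c\T x < 0$, and (II)~$\exists G \geq 0$ with $M\T G = c$, holds. This follows from the closedness of the finitely generated cone spanned by the rows of $M$ together with the separating-hyperplane theorem; alternatively it is immediate from LP duality applied to $\min\{c\T x : Mx \geq 0\}$, whose homogeneity forces the optimum to be $0$ or $-\infty$. It then remains only to check that (II) is \emph{verbatim} system (\ref{eq:Pone}--e).

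For this I would match coefficients in the identity $\sum_{(i,j)\in A(\vbeta)} G_{ij}\, g_{ij} \equiv h$, treating both sides as linear functionals of $\vell$, of each $s_i$, and of each $r_j$ separately. The $\vell$-coefficient yields $\sum_{(i,j)\in A(\vbeta)} G_{ij}\alpha_i \vx_j = \vnull$; the coefficient of $s_i$ yields $\sum_{j:(i,j)\in A(\vbeta)} G_{ij} = 1$; and the coefficient of $r_j$ yields $\sum_{i:(i,j)\in A(\vbeta)} G_{ij} = 1$. Adopting the convention $G_{ij}=0$ for $(i,j)\in\overline{A}(\vbeta)$, which is (\ref{eq:Pfive}), these are precisely (\ref{eq:Pone}), (\ref{eq:Pthree}) and (\ref{eq:Ptwo}), while the Farkas sign condition $G \geq 0$ on the active pairs is (\ref{eq:Pfour}). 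Thus (II) coincides with system P, and the ``exactly one'' conclusion is inherited directly from the Farkas alternative.

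The obstacle here is essentially bookkeeping rather than conceptual: the real work is choosing the correct Farkas form and performing the coefficient match cleanly, being careful that the multipliers, which are naturally indexed only by $A(\vbeta)$, are padded by zeros on $\overline{A}(\vbeta)$ so that the row- and column-sum constraints can be written as full sums over $1,\dots,n$. It is worth flagging that the matched constraints (\ref{eq:Ptwo})--(\ref{eq:Pfive}) say exactly that $G$ is a bistochastic matrix supported on the active pairs; this is the point of contact with Birkhoff's Theorem (Lemma~\ref{lem:birkhoff}) exploited in the later lemmas, although it is not needed for the alternative itself.
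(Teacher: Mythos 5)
Your proposal is correct and follows essentially the same route as the paper, which also disposes of the lemma by observing that the two systems form a Farkas (primal--dual) pair once the variables $G_{ij}$ with $(i,j)\in\overline{A}(\vbeta)$ are treated as identically zero. Your coefficient matching (the $\vell$-block giving (\ref{eq:Pone}), the $s_i$-block the row sums (\ref{eq:Pthree}), the $r_j$-block the column sums (\ref{eq:Ptwo}), and nonnegativity giving (\ref{eq:Pfour})) just makes explicit what the paper leaves as a one-line remark.
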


\begin{proof} The statement follows directly from Farkas' Lemma since
the systems form a primal-dual pair (if 
the redundant variables $G_{ij}$ with $(i,j) \in \overline{A}(\vbeta)$
are removed). 
\end{proof}

\begin{lemma}\label{lem:Ltwo}
If $\vbeta \in \RR^p$ and $(\vr^*, \vs^*, \vell^*)$
is a solution of (\ref{eq:Done}b), 
then for any sufficiently small $d>0$ it holds true that $F(\vbeta)>F(\vbeta+d\vell^*)$.
\end{lemma}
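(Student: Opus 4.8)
The plan is to show that $\vell^*$ is a strict descent direction by evaluating $F$ along the ray $\vbeta + d\vell^*$ for small $d>0$ and comparing with $F(\vbeta)$. First I would record how the residuals transform: since
$$
e_i^{\vbeta+d\vell^*} = y_i - \vx_i\T(\vbeta + d\vell^*) = e_i^{\vbeta} - d\,\vx_i\T\vell^*,
$$
the residuals vary linearly, hence continuously, in $d$. This linearity is what lets me turn the geometric claim into an algebraic one.

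Next I would argue that for all sufficiently small $d>0$ one can select a single permutation $\pi$ that is consistent with both $\vbeta+d\vell^*$ and $\vbeta$. The point is sign stabilization: for a pair $i,j$ with $e_i^{\vbeta}\neq e_j^{\vbeta}$, the sign of $e_i^{\vbeta+d\vell^*}-e_j^{\vbeta+d\vell^*}$ is fixed by $e_i^{\vbeta}-e_j^{\vbeta}$ once $d$ is small, so every strict inequality present at $\vbeta$ is preserved. Consequently there is a threshold $d_0>0$ for which $\mathcal{P}_n(\vbeta+d\vell^*)$ is constant and contained in $\Pb$ for all $d\in(0,d_0)$. I fix such a $\pi \in \mathcal{P}_n(\vbeta+d\vell^*)\subseteq\Pb$, and note that then every pair $(i,\pi(i))$ lies in $A(\vbeta)$.

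With this $\pi$, Observation~\ref{obs:polytime} gives both $F(\vbeta)=\sum_{i=1}^n \alpha_i e_{\pi(i)}^{\vbeta}$ (because $\pi\in\Pb$) and $F(\vbeta+d\vell^*)=\sum_{i=1}^n \alpha_i e_{\pi(i)}^{\vbeta+d\vell^*}$ (because $\pi$ is consistent with $\vbeta+d\vell^*$). Substituting the residual formula yields
$$
F(\vbeta+d\vell^*) = F(\vbeta) - d\sum_{i=1}^n \alpha_i \vx_{\pi(i)}\T\vell^*,
$$
so the lemma reduces to proving that the coefficient $\sum_{i=1}^n \alpha_i \vx_{\pi(i)}\T\vell^*$ is strictly positive.

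The key step, and the only place the hypothesis that $(\vr^*,\vs^*,\vell^*)$ solves \eqref{eq:Done}--\eqref{eq:Dtwo} enters, is a summation trick. Since each $(i,\pi(i))\in A(\vbeta)$, inequality \eqref{eq:Done} gives $\alpha_i \vx_{\pi(i)}\T\vell^* + s_i^* + r_{\pi(i)}^* \geq 0$; summing over $i$ and using that $\pi$ is a bijection, so $\sum_{i=1}^n r_{\pi(i)}^* = \sum_{j=1}^n r_j^*$, I obtain $\sum_{i=1}^n \alpha_i \vx_{\pi(i)}\T\vell^* \geq -\bigl(\sum_{i=1}^n s_i^* + \sum_{j=1}^n r_j^*\bigr)$. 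By \eqref{eq:Dtwo} the right-hand side is strictly positive, which forces $F(\vbeta+d\vell^*)<F(\vbeta)$ for every $d\in(0,d_0)$. I expect the main obstacle to be the stabilization argument of the second paragraph—justifying that a single $\pi$ can be taken consistent simultaneously with $\vbeta$ and with $\vbeta+d\vell^*$; the concluding inequality is then a routine consequence of $\pi$ being a permutation.
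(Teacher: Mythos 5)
Your proof is correct and follows essentially the same route as the paper's: pick a permutation $\pi$ consistent with both $\vbeta$ and $\vbeta+d\vell^*$ for small $d>0$, sum the inequalities \eqref{eq:Done} over the pairs $(i,\pi(i))$, and use \eqref{eq:Dtwo} to conclude $\sum_{i=1}^n \alpha_i \vx_{\pi(i)}\T\vell^* > 0$. The only difference is that you spell out the sign-stabilization argument and the membership $(i,\pi(i))\in A(\vbeta)$, which the paper leaves implicit.
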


\begin{proof}
Let $(\vell^*, \vr^*, \vs^*)$ be a solution
of (\ref{eq:Done}b). If $d > 0$ is sufficiently
small, then there exists 
\begin{align}\label{eq:piq}
\pi \in \Pb \cap \mathcal{P}_n(\vbeta + d\vell^*).
\end{align} 
In other words, if $d$ is susfficiently small, $\vbeta$ and $\vbeta + d\vell^*$ are in the same cell $C^\pi$. 

From (\ref{eq:Done}) select the inequalities
with $(i,j)$ satisfying $\pi(i) = j$ and sum them up:  
\begin{align*}
0 \leq \sum_{i = 1}^n 
\left(
\alpha_i  \vx_{\pi(i)}\vell^* +  s_i + r_{\pi(i)}\right) 
&=
\sum_{i=1}^n \alpha_i \vx_{\pi(i)}\T\vell^* 
+
\sum_{i=1}^n s_i
+
\sum_{j=1}^n r_{j}.
 \end{align*}
Since $\sum_{i=1}^n s_i + \sum_{j=1}^n r_{j} < 0$ 
by (\ref{eq:Dtwo}), we have proved
\begin{equation}
\sum_{i=1}^n \alpha_i \vx_{\pi(i)}\T\vell^* > 0.
\label{eq:krok}
\end{equation}
Now, using (\ref{eq:piq}) and (\ref{eq:krok}) we can write
\begin{align*}
F(\vbeta + d\vell^*) - F(\vbeta)
& =
\sum_{i=1}^n \alpha_i(y_{\pi(i)} - \vx_{\pi(i)}\T(\vbeta + d\vell^*))
-
\sum_{i=1}^n \alpha_i(y_{\pi(i)} - \vx_{\pi(i)}\T\vbeta)
\\ & =
-d \sum_{i=1}^n \alpha_i \vx_{\pi(i)}\T\vell^* < 0.
 \end{align*}
\end{proof}

\begin{lemma}\label{lem:Lthree}
Let $\vbeta\in\RR^p$ and let there exist a bistochastic $(n\times n)$-matrix $G$ satisfying 
\begin{itemize}
\item[(i)] $\sum_{j=1}^n \vx_{j} \sum_{i=1}^n \alpha_i G_{ij} = \vnull$
 and
\item[(ii)] $G_{ij} = 0$ for all $(i,j)\in\overline{A}(\vbeta)$.
\end{itemize}
Then $\vbeta$ is a minimizer of $F$.
\end{lemma}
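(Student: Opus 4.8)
The plan is to convert the bistochastic certificate $G$ into an affine lower bound for $F$ that is in fact \emph{constant} (this will come from~(i)) and that is \emph{attained at} $\vbeta$ (this will come from~(ii)); once such a bound is produced, optimality of $\vbeta$ is immediate. The engine for turning $G$ into usable combinatorial data is Birkhoff's Theorem. So first I would invoke Lemma~\ref{lem:birkhoff} to write $G = \sum_{m=1}^M \lambda_m P_{\pi_m}$ with $\lambda_m > 0$, $\sum_{m=1}^M \lambda_m = 1$, and $\pi_1,\dots,\pi_M \in \Sn$.

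The crucial step (this is the ``Claim~C'' trick) is to show that \emph{every} permutation occurring in this decomposition is consistent with $\vbeta$, i.e.\ $\pi_m \in \Pb$ for all $m$. To see it, fix any $(i,j) \in \overline{A}(\vbeta)$; by~(ii) and nonnegativity of the $\lambda_m$ and the entries of permutation matrices, $0 = G_{ij} = \sum_{m=1}^M \lambda_m (P_{\pi_m})_{ij}$ forces $(P_{\pi_m})_{ij} = 0$ for every $m$. Hence each $\pi_m$ uses only $\vbeta$-active pairs, meaning $(i,\pi_m(i)) \in A(\vbeta)$ for all $i$. I then upgrade ``uses only active pairs'' to ``consistent'' by a counting argument: let $v_1 < \cdots < v_k$ be the distinct residual values and $I_t \coloneqq \{\,j : e^{\vbeta}_j = v_t\,\}$ the corresponding index blocks. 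A pair $(i,j)$ is active precisely when $j \in I_t$ for the block $t$ into which position $i$ falls in the sorted order, so a permutation using only active pairs maps the $t$-th consecutive block of positions \emph{into} $I_t$; since this map is injective and the block sizes match $|I_t|$, it is onto $I_t$ for each $t$, which is exactly the consistency condition~\eqref{eq:consistent:permutation}. Thus $\pi_m \in \Pb$.

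With consistency secured, I would rewrite~(i). Substituting the decomposition and re-indexing by $j = \pi_m(i)$ yields $\sum_{j=1}^n \vx_j \sum_{i=1}^n \alpha_i G_{ij} = \sum_{m=1}^M \lambda_m \sum_{i=1}^n \alpha_i \vx_{\pi_m(i)}$, so~(i) says exactly $\sum_{m=1}^M \lambda_m \sum_{i=1}^n \alpha_i \vx_{\pi_m(i)} = \vnull$. Now for an arbitrary $\vbeta' \in \RR^p$, since $F$ is a maximum over $\Sn$ we have $F(\vbeta') \geq \sum_{i=1}^n \alpha_i(y_{\pi_m(i)} - \vx_{\pi_m(i)}\T\vbeta')$ for each $m$; taking the $\lambda_m$-weighted convex combination gives
$$
F(\vbeta') \ \geq\ \sum_{m=1}^M \lambda_m \sum_{i=1}^n \alpha_i y_{\pi_m(i)} \ -\ \Bigl(\sum_{m=1}^M \lambda_m \sum_{i=1}^n \alpha_i \vx_{\pi_m(i)}\Bigr)\!\T \vbeta' \ =\ \sum_{m=1}^M \lambda_m \sum_{i=1}^n \alpha_i y_{\pi_m(i)} \ =:\ c,
$$
where the linear term vanished by~(i), so the bound $c$ is independent of $\vbeta'$. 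Evaluating the same inequality chain at $\vbeta' = \vbeta$ and using that each $\pi_m \in \Pb$ makes every inner sum equal to $F(\vbeta)$ by Observation~\ref{obs:polytime}, I obtain $c = F(\vbeta)$. Hence $F(\vbeta') \geq F(\vbeta)$ for all $\vbeta'$, which is the claim.

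The main obstacle is the combinatorial passage inside Claim~C, namely that a permutation supported on $\vbeta$-active pairs is automatically consistent with $\vbeta$; the block/counting argument above is what makes this work, and it is precisely the point where the possibly exponential family $\Pb$ gets encoded compactly into the single matrix $G$. Everything after that is linear bookkeeping, and the two hypotheses play complementary roles: condition~(i) removes all dependence of the lower bound on $\vbeta'$, while condition~(ii) guarantees the bound is tight at $\vbeta$.
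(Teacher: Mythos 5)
Your proof is correct, and it shares the paper's central device---Birkhoff's decomposition $G=\sum_m\lambda_m P_{\pi_m}$ and the observation that hypothesis (ii) forces every $\pi_m$ to be consistent with $\vbeta$---but it finishes differently. The paper packages the weights $\gamma_\pi$ as a feasible point of an auxiliary LP $\mathfrak{L}(\vbeta)$ and then invokes the strong LP duality theorem (Claims A and B) to identify $F(\vbeta)$ with the minimum of $F$ over the cells containing $\vbeta$, concluding by convexity that a local minimizer is global. You instead use the weights directly: since $F$ is a pointwise maximum over $\Sn$, each $\pi_m$ gives an affine lower bound on $F$, the $\lambda_m$-convex combination of these bounds is constant by (i), and consistency of the $\pi_m$ makes it tight at $\vbeta$. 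This is essentially weak duality in place of strong duality, and it buys a shorter, more elementary argument that needs neither the boundedness of $\mathfrak{L}(\vbeta)$ nor the local-to-global step. You also supply a detail the paper elides: the paper passes from ``$(P_{\pi_m})_{ij}=0$ on $\overline{A}(\vbeta)$'' to ``$\pi_m\in\Pb$'' with only the phrase ``in other words,'' even though activity of each pair $(i,\pi_m(i))$ is a priori witnessed by possibly different consistent permutations; your block-counting argument (a permutation supported on active pairs maps each position block injectively into the corresponding equal-residual block, hence onto it by cardinality) closes that gap cleanly.
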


\begin{proof}   We divide the proof into three claims. Let $\mathfrak{L}(\beta)$ be a shorthand for the linear programming problem
\begin{multline*}
\max_{\substack{\gamma_{\widetilde\pi} \in \RR:\\ \widetilde\pi \in \Pb}}
\Bigg\{\sum_{\pi\in\Pb} \!\!\!\! \gamma_{\pi} \sum_{i=1}^n \alpha_i y_{\pi(i)}\ 
\Bigg|
\sum_{\pi\in\Pb} \!\!\!\! \gamma_\pi\sum_{i=1}^n \alpha_i \vx_{\pi(i)} 
= \vnull,
\\
\sum_{\pi\in \Pb} \!\!\!\! \gamma_{\pi} = 1,
\ \ \gamma_\pi \geq 0\ \ \forall \pi\in\Pb
\Bigg\}
\end{multline*}
and let $L^*(\vbeta)$ stand for its optimal objective value.

\emph{\bf Claim A.} \emph{If $\mathfrak{L}(\vbeta)$ is feasible, then $F(\vbeta) = L^*(\vbeta)$.}

\emph{Proof of Claim A.} 
Observe that $F(\vbeta) = \sum_{i=1}^n \alpha_i (y_{\pi(i)}
- \vx_{\pi(i)}\T\vbeta)$ for \emph{any} permutation $\pi \in \Pb$. Thus,
if $\gamma_{\pi} \geq 0$ and $\sum_{\pi\in\Pb} \gamma_\pi = 1$, then
$$
\sum_{\pi\in\Pb} \!\!\!\! \gamma_{\pi} \sum_{i=1}^n \alpha_i (y_{\pi(i)}
- \vx_{\pi(i)}\T\vbeta) = \sum_{\pi\in\Pb} \!\!\!\! \gamma_{\pi} F(\vbeta) = F(\vbeta).
$$
In addition we have
\begin{align*}
\sum_{\pi\in\Pb} \!\!\!\! \gamma_{\pi} \sum_{i=1}^n \alpha_i (y_{\pi(i)}
- \vx_{\pi(i)}\T\vbeta) 
&= 
\sum_{\pi\in\Pb} \!\!\!\! \gamma_{\pi} \sum_{i=1}^n \alpha_i y_{\pi(i)}
- \sum_{\pi\in\Pb} \!\!\!\! \gamma_{\pi} \sum_{i=1}^n \alpha_i \vx_{\pi(i)}\T\vbeta 
\\ &
=
\sum_{\pi\in\Pb} \!\!\!\! \gamma_{\pi} \sum_{i=1}^n \alpha_i y_{\pi(i)}
 \end{align*}
since $\sum_{\pi\in\Pb} \gamma_\pi\sum_{i=1}^n \alpha_i \vx_{\pi(i)} 
= \vnull$ by the feasibility assumption.  \emph{End of proof of Claim A.}

\emph{\bf Claim B.} \emph{If $\mathfrak{L}(\vbeta)$ is feasible,
then $\vbeta$ is a minimizer of $F$.}

\emph{Proof of Claim B.} First observe that the objective function 
of $\mathfrak{L}(\beta)$ is bounded. So, under the assumption of feasibility, we can use the LP duality theorem. With Claim~A we get
\begin{align}
F(\vbeta) & = L^*(\vbeta) \nonumber
\\          & =
\text{the optimal objective value of the dual LP of $\mathfrak{L}(\vbeta)$}
                          \nonumber
 \\         & = \min_{\substack{t\in\RR, \\ \widetilde\vbeta\in\RR^p}}
\left\{t\ \Bigg|\
t \geq \sum_{i=1}^n \alpha_i(y_{\pi(i)} - \vx_{\pi(i)}\T\widetilde\vbeta)
\ \ \forall \pi \in \Pb
\right\} \label{eq:dualxx}
\\ 
& = \min\left\{ F(\widetilde\vbeta)\ \Bigg|\ 
\widetilde\vbeta \in \!\!\!\!\bigcup_{\pi\in\Pb} \!\!\!\!C^{\pi}\right\}
\nonumber\\ 
& = \min\{ F(\widetilde\vbeta)\ |\ {\widetilde\vbeta \in \RR^p}\}.
\nonumber
\end{align}
It it straightforward to verify that  (\ref{eq:dualxx}) is the dual of $\mathfrak{L}(\vbeta)$.
Expression (\ref{eq:dualxx}) resembles
Observation~\ref{obs:LPform} with the exception that we quantify over $\Pb$ rather than $\Sn$. In other words, expression (\ref{eq:dualxx}) tells us that $\beta$ is a \emph{local} minimizer (on a certain neighborhood of $\vbeta$ intersecting only cells $C^{\pi} \ni \vbeta$).
But, as $F$ is a convex function, a local minimizer must be global.
\emph{End of proof of Claim~B.}

\textbf{Claim C} (the key part of the proof)\textbf{.} 
\emph{Under the assumptions of Lemma~\ref{lem:Lthree},
$\mathfrak{L}(\vbeta)$ is feasible.}

\emph{Proof of Claim C.} By Birkhoff's Theorem (Lemma~\ref{lem:birkhoff}) there exists 
a number $M \geq 1$, a family $P_{\pi_1}, \dots, P_{\pi_M}$ of permutation matrices
(representing certain permutations $\pi_1, \dots, \pi_M \in \Sn$)
and coefficients $\lambda_1, \dots, \lambda_M > 0$ 
such that $\sum_{m=1}^M \lambda_m = 1$ and
$$
G = \sum_{m=1}^M \lambda_m P_{\pi_m}.
$$
Since all matrices $P_{\pi_m}$ are 0-1 matrices and
the coefficients $\lambda_m$ are positive, assumption 
(ii) implies the following: \emph{for every $m$, $(P_{\pi_m})_{ij} = 0$ for all $(i,j) \in \overline{A}(\vbeta)$.} In other words,
for every $m$ we have $\pi_m \in \Pb$. Now, for every $\pi \in \Pb$, define
\begin{equation}
\gamma_{\pi} \coloneqq \left\{
\begin{array}{cl}
\lambda_m & \text{if $\pi = \pi_m$ for some $m \in \{1, \dots, M\}$}, \\
0               &  \text{otherwise}.
\end{array} 
\right. 
\label{eq:gl}
\end{equation}
To summarize: we have $\gamma_{\pi} > 0$ for (some)
permutations $\pi \in \Pb$ and $\gamma_{\pi} = 0$
for all permutations $\pi \in \Sn \setminus \Pb$.

Let us make an easy but important observation: for any $(i,j)$ it holds true that
$$
\sum_{\substack{\pi\in\Pb:\\ \pi(i) = j}} \gamma_{\pi}
= \sum_{m=1}^M \lambda_m (P_{\pi_m})_{ij}.
$$
[Indeed, $(P_{\pi_m})_{ij} = 0$ if $\pi_m(i) \neq j$
and $(P_{\pi_m})_{ij} = 1$ if $\pi_m(i) = j$. So, both sides just sum up those $\lambda$'s corresponding to permutations
$\pi$ satisfying $\pi(i) = j$. On the left-hand side we can see $\gamma$'s, but by (\ref{eq:gl}) they are just $\lambda$'s renamed.]

Using assumption (i) we get
\begin{align*}
\vnull &= \sum_{j=1}^n \vx_j \sum_{i=1}^n \alpha_i G_{ij}
= \sum_{i=1}^n \alpha_i \sum_{j=1}^n \vx_j  G_{ij}
= \sum_{i=1}^n \alpha_i \sum_{j=1}^n \vx_j  
\sum_{m=1}^M \lambda_m (P_{\pi_m})_{ij}
\\ & =
\sum_{i=1}^n \alpha_i \sum_{j=1}^n \vx_j  
\sum_{\substack{\pi\in\Pb:\\ \pi(i) = j}} \gamma_{\pi}
=
\sum_{i=1}^n \alpha_i \sum_{j=1}^n   
\sum_{\substack{\pi\in\Pb:\\ \pi(i) = j}} \!\! \vx_j \gamma_{\pi}
\\&=
\sum_{i=1}^n \alpha_i   \!\! 
\sum_{\pi\in\Pb} \vx_{\pi(i)} \gamma_{\pi}
= \sum_{\pi\in\Pb} \!\!\gamma_{\pi} \sum_{i=1}^n \alpha_i \vx_{\pi(i)}.
\end{align*}
The last expression, together with $1 = \sum_{i=1}^M \lambda_m = 
\sum_{\pi\in\Pb} \gamma_{\pi}$ and $\gamma_\pi \geq 0$,
imply the feasibility of $\mathfrak{L}(\vbeta)$. \emph{End of proof of Claim C.}
\end{proof}

\noindent
\emph{Proof of Theorem~\ref{theo:correctness}.}
\emph{Case I: System (\ref{eq:auxLPone}b)
is feasible.}
If (\ref{eq:auxLPone}b)
is feasible, 
then $(\vs^*, \vr^*, \vell^*)$ solves (\ref{eq:Done}b). By Lemma~\ref{lem:Ltwo},
$\vell^*$ is an improving direction. The ``line-search'' function
$$g(\delta) \coloneqq F(\vbeta^* + \delta\vell^*)\ \ \text{defined for\ \ $\delta > 0$}
$$ 
is continuous, piecewise
linear and convex (these properties are inherited from~$F$)
and its line segments are connected in points $\delta = d_{ij} > 0$, where $d_{ij}$ are the numbers computed in Step~7. 
Note that $\beta^* + d_{ij}\ell^* \in H_{ij}$ by \eqref{eq:property:d:ij}. 

From Step~2 it is obvious that $F(\vbeta^*) \leq F(\vbeta)$. 
If $D \neq \emptyset$, the choice of $d^*$ implies that
$F(\vbeta^* + d^*\vell^*) < F(\vbeta^*)$. 

If $D = \emptyset$, then $g(\delta)$ is a linear function with negative slope; thus $g(\delta) = F(\vbeta^* + \delta\vell^*) \rightarrow -\infty$ for $\delta \rightarrow \infty$. 
(In other words, the ray 
$\{\vbeta^* + \delta\vell^*\ |\ \delta > 0\}$ lies in an unbounded cell $C^{\pi}$ and $F$, being a linear function on $C^{\pi}$, decreases along this ray.) 
This proves correctness of Step~8.

\emph{Case II: System (\ref{eq:auxLPone}b)
is infeasible.} In this case, also system (\ref{eq:Done}b) is infeasible.
[\emph{Proof.} If (\ref{eq:Done}b) has a solution $(\vell, \vr, \vs)$ satisfying $\sum_{i=1}^n s_i + \sum_{j=1} r_j = -\eta$ with $\eta > 0$, then
$(\frac{\vell}{\eta}, \frac{\vr}{\eta}, \frac{\vs}{\eta})$ is a solution of (\ref{eq:auxLPone}b).]

 By Lemma~\ref{lem:Lone}, system (\ref{eq:Pone}--e) is feasible. The matrix $G \equiv (G_{ij})_{i,j=1,\dots n}$ from (\ref{eq:Pone}--e) satisfies the assumptions of
Lemma \ref{lem:Lthree}. The Lemma guarantees that $\vbeta^*$ is a minimizer.

The unboundedness test in Step~3 is correct obviously. 

To summarize, we have proved:
\begin{itemize}
\item[(i)] The algorithm produces a sequence of points $\vbeta_1, \vbeta_2, \dots$ such that $F(\vbeta_1) > F(\vbeta_2) > \cdots$.
\item[(ii)] If the algorithm terminates, it outputs a correct answer (either ``an optimal point found'' or ``$F$ is unbounded'').
\end{itemize}

It remains to prove that the algorithm terminates. We show that \emph{no cell of $\mathfrak{A}$ is visited twice}. 
Let $\pi_k$ be the permutation found in Step 1 in the $k$th iteration of the algorithm. Let $\vbeta^*_k$ be minimum of \eqref{eq:LPI} computed in Step~2. If we find out in Steps~5 and 6 that $\vbeta^*_k$ is not the optimal point, we move to a point $\vbeta'_k$ in Step 9. Clearly, $F(\vbeta'_k) < F(\vbeta^*_k)$. 
Because $F(\vbeta^*_k)$ is the minimal value of $F(\vbeta)$ over $C^{\pi_k}$, it follows that $F(\vbeta_{k'}) \not\in C^{\pi_k}$ for all $k' \geq k+1$. Thus, the number of iterations is bounded by the number of cells, which is $O(n^{2p-2})$ by~(\ref{eq:noc}).
\qed

\section{Concluding remarks and comments}

\subsection{Strengthening the theory}

Various properties required for Theorem \ref{theo:correctness} are stated as implications. 
Nevertheless, it is worth noting that the converse implications hold true as well in most cases (even if the converse implications are not used in the proof):
\begin{itemize}
    \item Lemma \ref{lem:Ltwo} provides the crucial building block of WoA algorithm. It allows for finding a representative of the subgradient in Step~5. It says that any solution of system (\ref{eq:Done}b) can be used as the improving direction. The converse holds true, too: The system (\ref{eq:Done}b) for a given $\vbeta$ is feasible if and only if $\vbeta$ is not optimal. 
        
    \item Claims A, B and C in the proof of Lemma \ref{lem:Lthree} hold as equivalences, too. For Claims~A and B, the converse implications are straightforward. For Claim C, the converse implication reads ``If $\mathfrak{L}(\vbeta)$ is feasible, then properties (i)--(ii) in Lemma~\ref{lem:Lthree} hold true''. To fulfill the consequent of the implication, it is sufficient to consider the matrix $G$ in the form $G_{ij} = \sum_{\pi\in P(\vbeta):\pi(i)=j} \gamma_{\pi}$. As a result, Lemma~\ref{lem:Lthree} can itself be stated as an equivalence. 
\end{itemize}

\subsection{Comparison of WoA with generic gradient-descent (``GGD'') methods}\label{sect:gradient}%

Here we understand GGD in the following sense. Assume that an initial point $\vbeta$ is given. Then, repeat the following steps:

\vspace{.1cm}

    \begin{enumerate}[label=(\roman*)]
        \item If $\vbeta$ is a minimizer (or a point close to the minimizer), then \texttt{stop}.
\item If $F$ is smooth in $\vbeta$, take the gradient $\nabla F(\vbeta)$, perform a step in the direction $-\nabla F(\vbeta)$ (where the step length can be determined either by line search or by another strategy), and update $\vbeta$ accordingly.
\item If $F$ is not smooth in $\vbeta$, then calculate a ``small'' perturbation vector $\vdelta$ such that $F$ is smooth in $\vbeta + \vdelta$ and replace $\vbeta$ by $\vbeta+\vdelta$. [For example, one can consider a random perturbation $\vdelta$. 
Another example is a deterministic construction of $\vdelta$ guaranteeing that $F$ smooth in $\vbeta + \vdelta$; such $\vdelta$ can be constructed  using bit-size (``Big-$L$'') arguments.]
\end{enumerate}

\vspace{.1cm}
In the proof of Theorem~\ref{theo:correctness} we showed that WoA has the property that \emph{no cell of $\mathfrak{A}$ is visited twice}. This is the main advantage of WoA
compared to GGD. 
Indeed, the bad case which can happen to GGD is illustrated in Figure~\ref{fig:ggd:example}. The figure consists of two parts:
\begin{itemize}
    \item In the left part, GDD is run from a point $\vbeta_0$. In each iteration, say $i$th, length $d_i$ of a step in direction $\vell \coloneqq -\nabla F(\vbeta_i)$ is chosen via line search such that $F(\vbeta_i - d\vell)$ is minimized. This means that every step ends in a non-smooth point, denoted by $\vbeta_i^*$. As a perturbation strategy, the step in direction $\vell$ is prolonged slightly by a perturbation $\vdelta_i$.

        This results in a sequence $\vbeta_1, \vbeta_2, \ldots$, which repeatedly visits cells $C^{\pi_1}$ and $C^{\pi_2}$; the sequence approaches vertex $\vbeta'_0$. 

    \item In the right part, we assume that the algorithm is standing in vertex $\vbeta'_0$. WoA would find an improving direction in the yellow cone. Any perturbation ending in the union of cells $C^{\pi_5}$ and $C^{\pi_6}$, e.g. randomly chosen $\vdelta'_0$, is sufficient for GGD to continue. Even if we are able to find such a perturbation (which might be hard without WoA in general), GDD improves $F(\vbeta)$ only slowly, since the contours in $C^{\pi_5}$ and $C^{\pi_6}$ (and also the adjacent cells $C^{\pi_4}$ and $C^{\pi_5}$) are almost parallel.
\end{itemize}

\begin{figure}[tb]
\centering
\includegraphics[scale=1]{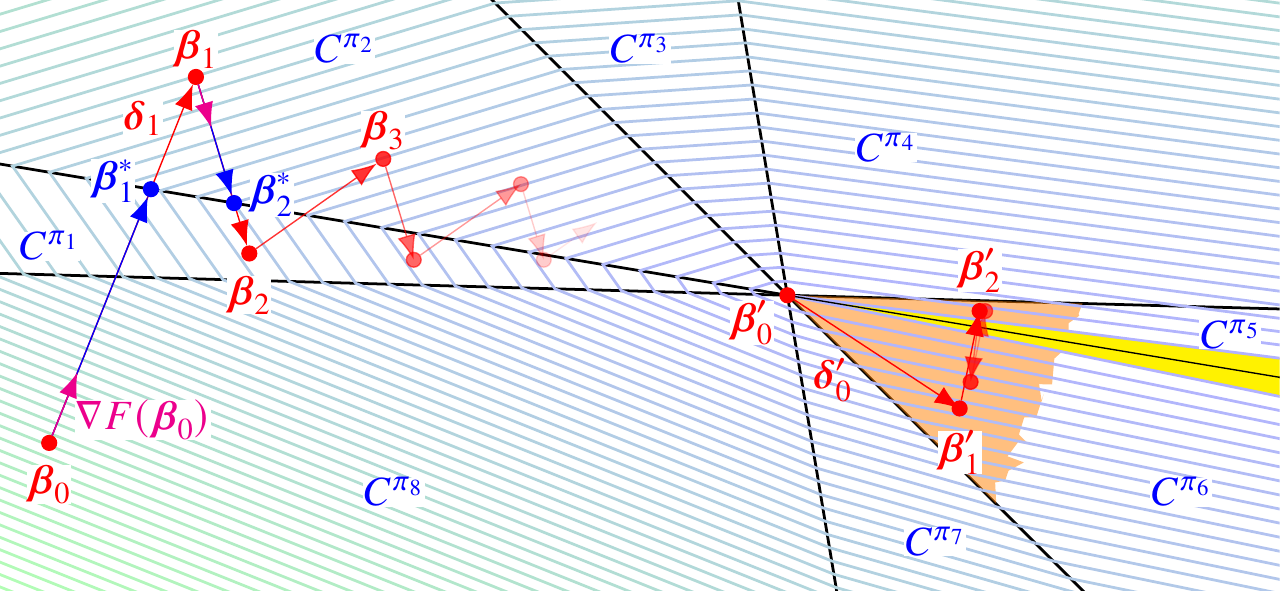}
\caption{An illustration of gradient-descent method (for details see Section \ref{sect:gradient}).} 
\label{fig:ggd:example}
\end{figure}

\begin{remark} 
Recall that the most difficult step is finding a representative of the subgradient of $F$ in a point $\vbeta$ where $F$ is not smooth. This problem can be either resolved by LP as in Step~5 of WoA (and then GGD would be essentially the same as WoA), or by a ``rescue'' strategy overcoming the problem. This is exactly step (iii) of GGD: in a non-smooth point, the perturbation will push $\vbeta$ to a smooth point from where GGD can continue.  
\end{remark}

\subsection{Practical considerations: Two-phase algorithms} 

The WoA algorithm can be initialized from an arbitrary point $\vbeta_0 \in \RR^p$ and it constructs a sequence of points $\vbeta_1, \vbeta_2, \dots, \vbeta_K$ such that $F(\vbeta_0) > F(\vbeta_1) > \cdots > F(\vbeta_K) = \min_{\vbeta\in\RR^p} F(\vbeta)$ 
or reports that $F$ is unbounded from below. As stated in Section~\ref{sect:justification},
we do not have a better bound than $K = O(n^{2p-2})$. In every iteration, one or two LPs are supposed to be solved. It seems to be reasonable to use two-phase implementations:
\begin{itemize}\item \emph{Phase I: Heuristic.} Do your best to get close to a minimum of $F$ heuristically. There are no limits for fantasy what one can try. For example, gradient-descent with some perturbation strategy (e.g. the one used in Section \ref{sect:gradient}) to escape non-smooth points can work well.
    
    When the heuristic method is convinced that a minimizer can be ``close'' to the best point 
$\widetilde\vbeta$ found so far, switch to Phase II. 
\item \emph{Phase II: Exact minimization.} 
Run WoA from $\widetilde\vbeta$, let it find a minimizer exactly (or determine that $F$ is unbounded) and hope that few iterations will suffice. 
\end{itemize}

Although we are joking about this strategy---emphasizing that its claimed performance is not supported by theory---it resembles, in a sense, the crossover strategy from LP solvers.
As far as we are aware, the most successful implementations follow
a similar two-phase scheme: first, long-step interior point methods (IPMs) are used to get close to an optimal point, and then the solver switches to a version of the simplex method which finds an optimal point exactly.
(The switch is sometimes refereed to as ``crossover step''. This strategy overcomes the tedious rounding step of IPMs which is slow and numerically sensitive.)

\subsection{Complexity and pivoting strategies}
The most challenging question regarding complexity of WoA algorithm is whether the iteration bound $O(n^{2p-2})$ from Theorem~\ref{theo:correctness} can be reduced and, namely, whether it can be reduced to $q(n,p)$, where $q$ is a polynomial. (This would imply unconditional polynomiality of WoA.) 
Or do there exist Klee-Minty-like instances showing the opposite?

Nevertheless, even if the polynomial iteration bound cannot be derived, one can still try to reduce the number of iterations at least heuristically. There are some degrees of freedom in the formulation of the algorithm. Namely:
\begin{itemize}
    \item Step 1 of each iteration of WoA starts with some $\vbeta$ and any permutation $\pi$ consistent with this $\vbeta$ is selected to work with. It is tempting to formulate a strategy how to choose ``a good'' consistent permutation, such that the sequence of visited point $\vbeta^*_0, \vbeta^*_1, \ldots$ will make $F(\vbeta)$ decrease quickly.
    \item A solution $(\vell, \vs, \vr)$ of linear system \eqref{eq:auxLP} is computed (if exists; otherwise the algorithm ends) in Step 5 of each iteration of WoA. Vector $\vell$ is then used an improving direction from the current point $\vbeta^*$. There are two natural strategies what $\vell$ shall be found:
        \begin{enumerate}
            \item Find an $\vell$ such that $\{\vbeta \mid \vbeta^* + d \vell,\ d \ge 0\}$ covers an edge of $\mathfrak{A}$. Then WoA traverses edges of cells of $\mathfrak{A}$. This type of improving directions resembles the simplex method and allows for a conceptual simplification of the algorithm. However, we currently do not know any efficient way to compute this type of improving directions.
            \item Find $\vell$ such that it is the steepest improving direction. 
 It can be found using the program 
                \begin{equation}
\begin{aligned}
    \min_{\vell \in \RR^p, \vr \in \RR^n, \vs \in \RR^n} \sum_{i=1}^n s_i &+ \sum_{j=1}^n r_j \\
    \text{subject to }\quad\alpha_i \vx_{j}\T\vell +  s_i + r_j &\geq \phantom{-}0\quad\quad \forall (i,j) \in A(\vbeta^*),\\
\lVert \vell \rVert &\le 1.
\end{aligned}
\label{eq:best:improving:direction}\end{equation}
Since \eqref{eq:best:improving:direction} is a second-order cone program, it is easily solvable with interior point methods.
        \end{enumerate}
\end{itemize}
Are there other promising strategies for selecting a promising consistent permutation and/or computing good improving direction?


%
%



\end{document}